\setlist[itemize]{leftmargin=20pt}
\newcommand{\N}{\ensuremath{\mathbb{N}}}
\newcommand{\Z}{\ensuremath{\mathbb{Z}}}
\newcommand{\R}{\ensuremath{\mathbb{R}}}
\newcommand{\C}{\ensuremath{\mathbb{C}}}
\newcommand{\mb}{\mathbf}
\newcommand{\mc}{\mathcal}
\newcommand{\ms}{\mathscr}
\DeclarePairedDelimiter\abs{\lvert}{\rvert}
\DeclarePairedDelimiter\brac[]
\DeclarePairedDelimiter\cbrace\{\}
\DeclarePairedDelimiter\ha()
\DeclarePairedDelimiter{\ip}\langle\rangle
\DeclarePairedDelimiter{\nrm}\lVert\rVert
\newcommand{\nrmb}[1]{\bigl\|#1\bigr\|}
\newcommand{\hab}[1]{\bigl(#1\bigr)}
\newcommand{\cbraceb}[1]{\bigl\{#1\bigr\}}
\newcommand{\bracb}[1]{\bigl[#1\bigr]}
\newcommand{\nrms}[1]{\Bigl\|#1\Bigr\|}
\newcommand{\has}[1]{\Bigl(#1\Bigr)}
\DeclareMathOperator{\re}{Re}
\DeclareMathOperator{\sgn}{sgn}
\DeclareMathOperator{\supp}{supp}
\DeclareMathOperator{\ind}{\mathbf{1}}
\DeclareMathOperator{\UMD}{UMD}
\DeclareMathOperator{\divv}{div}
\newcommand{\RH}{RH}
\DeclareMathOperator*{\essinf}{ess\,inf}
\DeclareMathOperator{\full}{full}
\DeclareMathOperator{\lac}{lac}
\DeclareMathOperator{\BHT}{BHT}
\DeclareMathOperator{\pv}{p.v.}
\newcommand{\sumkn}{\sum_{k= 1}^n}
\newcommand{\dd}{\hspace{2pt}\mathrm{d}}
\def\avint_#1{\mathchoice{\mathop{\kern 0.2em\vrule width 0.6em height 0.69678ex depth -0.58065ex \kern -0.8em \intop}\nolimits_{\kern -0.4em#1}}{\mathop{\kern 0.1em\vrule width 0.5em height 0.69678ex depth -0.60387ex \kern -0.6em \intop}\nolimits_{#1}} {\mathop{\kern 0.1em\vrule width 0.5em height 0.69678ex depth -0.60387ex \kern -0.6em \intop}\nolimits_{#1}} {\mathop{\kern 0.1em\vrule width 0.5em height 0.69678ex depth -0.60387ex \kern -0.6em \intop}\nolimits_{#1}}}
\newcommand{\inc}{\mb{\phi}}   
\newtheorem{theorem}{Theorem}
\newtheorem{lemma}[theorem]{Lemma}
\newtheorem{proposition}[theorem]{Proposition}
\theoremstyle{remark}
\newtheorem{remark}[theorem]{Remark}
\newtheorem{example}[theorem]{Example}
\theoremstyle{definition}
\newtheorem{definition}[theorem]{Definition}
\numberwithin{theorem}{section}
\numberwithin{equation}{section}
\title[Vector-valued extensions of operators through extrapolation]{Vector-valued extensions of operators through multilinear limited range extrapolation}
\author{Emiel Lorist and Zoe Nieraeth}
\thanks{The first author is supported by the VIDI subsidy 639.032.427 of the Netherlands Organisation for Scientific Research (NWO)}
\address{Delft Institute of Applied Mathematics \\ Delft University of Technology \\ P.O. Box 5031\\ 2600 GA Delft \\The Netherlands}
\email{e.lorist@tudelft.nl}
\email{znieraeth@bcamath.org}
\begin{document}
\begin{abstract}
We give an extension of Rubio de Francia's extrapolation theorem for functions taking values in $\UMD$ Banach function spaces to the multilinear limited range setting. In particular we show how boundedness of an $m$-(sub)linear operator
\[
T:L^{p_1}(w_1^{p_1})\times\cdots\times L^{p_m}(w_m^{p_m})\to L^p(w^p)
\]
for a certain class of Muckenhoupt weights yields an extension of the operator to Bochner spaces $L^{p}(w^p;X)$ for a wide class of Banach function spaces $X$, which includes certain Lebesgue, Lorentz and Orlicz spaces.

We apply the extrapolation result to various operators, which yields new vector-valued bounds. Our examples include the bilinear Hilbert transform, certain Fourier multipliers and various operators satisfying sparse domination results.
\end{abstract}

\keywords{Extrapolation, limited range, multilinear, UMD, Muckenhoupt weights, Banach function space, $p$-convexity, bilinear Hilbert transform, Fourier multipliers, sparse domination}

\subjclass[2010]{Primary: 42B25; Secondary: 42B15, 46E30}


\maketitle

\section{Introduction}

Scalar-valued extrapolation, using the theory of Muckenhoupt weights, has proven to be an essential tool in harmonic analysis. The classical extrapolation result (see \cite{Ru82} and \cite[Chapter IV]{GR85}) says that if  a (sub)linear operator $T$ satisfies for a fixed $p_0\in (1,\infty)$ and all weights $w$ in the \emph{Muckenhoupt} class $A_{p_0}$ the norm inequality
\begin{equation}\label{eq:extrapinput}
\|Tf\|_{L^{p_0}(w)}\leq C\,\|f\|_{L^{p_0}(w)}
\end{equation}
for all $f \in L^{p_0}(w)$, then we have for all $p\in (1,\infty)$ and all weights $w\in A_p$
\begin{equation}\label{eq:extrapout}
\|Tf\|_{L^p(w)}\leq C\,\|f\|_{L^p(w)}
\end{equation}
for all $f \in L^{p}(w)$. Numerous generalizations of this result have appeared, see for example \cite{AM07, CMP04, CM17, GM04, HMS88}. We mention several of them.

It was shown by Grafakos and Martell \cite{GM04} that extrapolation extends to the \emph{multilinear} setting. Indeed, they showed that given fixed exponents $p_1,\ldots, p_m\in (1,\infty)$, if for an $m$-(sub)linear operator $T$ and all weights $w_j^{p_j}\in A_{p_j}$ we have
\[
\|T(f_1,\ldots,f_m)\|_{L^{p}(w^p)}\leq C\, \prod_{j=1}^m\|f_j\|_{L^{p_j}(w_j^{p_j})},
\]
where $w= \prod_{j=1}^m w_j$ and $\frac{1}{p}=\sum_{j=1}^m\frac{1}{p_j}$, then the same estimate holds for all $p_j\in (1,\infty)$, weights $w_j^{p_j}\in A_{p_j}$ and $w$ and $p$ as before.

Considering a different kind of generalization, it was shown by Auscher and Martell \cite{AM07} that a \textit{limited range} version of the extrapolation result holds: if there are exponents $0<p_-<p_+\leq\infty$ such that the estimate \eqref{eq:extrapinput} is valid for a fixed $p_0\in (p_-,p_+)$ and all weights $w$ in the Muckenhoupt and \emph{Reverse H\"older} class $A_{p_0/p_-}\cap\RH_{(p_+/p_0)'}$, then \eqref{eq:extrapout} is valid for all $p\in (p_-,p_+)$ and all weights $w \in A_{p/p_-}\cap\RH_{(p_+/p)'}$.

Vector-valued extensions of the extrapolation theory have also been considered. Through an argument using Fubini's Theorem, the initial estimate \eqref{eq:extrapinput} immediately implies not only the estimate \eqref{eq:extrapout} for all $p \in (1,\infty)$, but also for extensions of the operator $T$ to functions taking values in the sequence spaces $\ell^s$ or more generally Lebesgue spaces $L^s$ for $s \in (1,\infty)$. Moreover, Rubio de Francia showed in \cite[Theorem 5]{Ru85} that one can take this even further. Indeed, this result states that assuming \eqref{eq:extrapinput} holds for some $p_0\in (1,\infty)$ and for all weights $w\in A_{p_0}$, then for each Banach function space $X$ with the $\UMD$ property, $T$ extends to an operator $\widetilde{T}$ on the Bochner space $L^p(X)$ which satisfies
\[
\nrmb{\widetilde{T}f}_{L^p(X)}\leq C \,\|f\|_{L^p(X)}
\]
for all $p\in(1,\infty)$ and all $f \in L^p(X)$ . Recently, it was shown by Amenta, Veraar, and the first author in \cite{ALV17} that given $p_-\in(0,\infty)$, if \eqref{eq:extrapinput} holds for $p_0\in (p_-,\infty)$ and all weights $w\in A_{p_0/p_-}$, then for each Banach function space $X$ such that $X^{p_-}$ has the $\UMD$ property, $T$ extends to an operator $\widetilde{T}$ on the Bochner space $L^p(w;X)$ and satisfies
\[
\nrmb{\widetilde{T}f}_{L^p(w;X)}\leq C \, \|f\|_{L^p(w;X)}
\]
for all $p\in(p_-,\infty)$, all weights $w \in A_{p/p_-}$ and all $f \in L^p(w;X)$. Here $X^{p_-}$ is the $p_-$-concavification of $X$, see Section \ref{sec:preliminaries} for the definition.

 Vector-valued estimates in harmonic analysis have been actively developed in the past decades. Important for the mentioned vector-valued extrapolation are the equivalence of the boundedness of the vector-valued Hilbert transform on $L^p(X)$  and the $\UMD$ property of $X$ for a Banach space $X$ (see \cite{Bo83,Bu83}) and the fact that for a Banach function space $X$ the $\UMD$ property implies the boundedness of the lattice Hardy--Littlewood maximal operator on $L^p(X)$ (see \cite{Bo84,Ru86}). For recent results in vector-valued harmonic analysis in $\UMD$ Banach function spaces, see for example \cite{BFR12,DK17, Ho14, HV15,Xu15}.

\bigskip

In the recent work \cite{CM17} by Cruz-Uribe and Martell both the limited range and the multilinear extrapolation result were combined, yielding a unified multilinear limited range version of the extrapolation result in the scalar-valued case. This result also covers vector-valued extensions to  $\ell^s$ for certain $s \in (0,\infty)$. This opened the question whether a unified multilinear limited range extrapolation theorem also holds for more general Banach function spaces. In this work, we give a positive answer to this question.

We now state our main result, in which we denote $X \in \UMD_{p_-,p_+}$ for the technical assumption that $\hab{(X^{p_-})^*}^{(p_+/p_-)'}$ has the $\UMD$ property, see Section \ref{sec:UMDp-p+} for a thorough discussion of this assumption. A more general version of this theorem can be found in Theorem \ref{thm:multi-limited-range-extrap} below.

\begin{theorem}\label{thm:maincor} Let $m\in\N$ and fix $0< p_j^-<p_j^+\leq\infty$ for $j\in\{1,\ldots,m\}$. Let $T$ be an operator defined on $m$-tuples of functions and suppose there exist $p_j\in (p_j^-,p_j^+)$ such that for all weights $w_j^{p_j} \in A_{{p_j/p_j^-}}\cap RH_{(p_j^+/p_j)'}$ and $f_j\in L^{p_j}(w_j^{p_j})$ we have
\begin{equation*}
  \nrmb{T(f_1,\ldots,f_m)}_{L^p(w^p)} \leq C\,\prod_{j=1}^m\|f_j\|_{L^{p_j}(w_j^{p_j})},
\end{equation*}
with $w=\prod_{j=1}^mw_j$, $\frac{1}{p}=\sum_{j=1}^m\frac{1}{p_j}$, and where $C>0$ depends only on the characteristic constants of the weights. Moreover, assume that $T$ satisfies one of the following conditions:
\begin{enumerate}[(i)]
\item \label{main:1} $T$ is $m$-linear.
\item \label{main:2} $T$ is $m$-sublinear and positive valued.
\end{enumerate}
Let  $X_1,\ldots, X_m$ be quasi-Banach function spaces over a $\sigma$-finite measure space $(S,\mu)$ and define $X=X_1\cdots X_m$. Assume that for all simple functions $f_j:\R^d \to X_j$ the function $\widetilde{T}f:\R^d \to X$ given by
\begin{equation*}
  \widetilde{T}(f_1,\ldots,f_m)(x,s) := T(f_1(\cdot, s),\ldots,f_m(\cdot,s))(x), \qquad x\in \R^d, \quad s\in S
\end{equation*}
is well-defined and strongly measurable. If $X_j\in \UMD_{p_j^-,p_j^+}$, then for all $p_j\in (p_j^-,p_j^+)$ and weights $w_j^{p_j} \in A_{{p_j/p_j^-}}\cap RH_{(p_j^+/p_j)'}$, $\widetilde{T}$ extends to a bounded operator on $L^{p_1}(w_1^{p_1};X_1)\times\cdots\times L^{p_m}(w_m^{p_m};X_m)$ with
\begin{equation*}
  \nrmb{\widetilde{T}(f_1,\ldots,f_m)}_{L^p(w^p;X)} \leq C'\,\prod_{j=1}^m\|f_j\|_{L^{p_j}(w_j^{p_j};X_j)},
\end{equation*}
for all $f_j\in L^{p_j}(w_j^{p_j};X_j)$, with $w$ and $p$ are as before, and where $C'>0$ depends on the $p_j$, $p_j^-$, $p_j^+$, the characteristic constants of the weights, and the spaces $X_j$.
\end{theorem}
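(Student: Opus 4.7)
My plan is to reduce to the scalar-valued hypothesis via a multilinear limited range Rubio de Francia iteration, with the $\UMD_{p_j^-,p_j^+}$ assumption supplying the crucial lattice maximal function bounds. After a standard density reduction to simple $X_j$-valued functions $f_j$, I would exploit the factorization $X = X_1 \cdots X_m$ and its pointwise H\"older inequality $\|u_1 \cdots u_m\|_X \le \prod_j \|u_j\|_{X_j}$ to rephrase the target norm inequality as a scalar integral estimate
\begin{equation*}
\int_{\R^d}\!\!\int_S \absb{\widetilde{T}(f_1,\ldots,f_m)(x,s)}\, h(x,s)\dd\mu(s)\dd x \lesssim \prod_{j=1}^m \|f_j\|_{L^{p_j}(w_j^{p_j};X_j)},
\end{equation*}
where $h$ is a nonnegative test element of unit norm in an appropriate K\"othe-dual Bochner space.

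For each index $j$, I would then manufacture scalar weights $w_j$ out of $f_j$ and $h$ via a Rubio de Francia algorithm. Let $M_j$ denote a lattice Hardy--Littlewood maximal operator acting on a Bochner space built from $X_j$, and set
\begin{equation*}
R_j g := \sum_{k=0}^\infty \frac{M_j^k g}{(2\|M_j\|)^k},
\end{equation*}
which satisfies the standard properties $g \le R_j g$, $\|R_j g\| \le 2\|g\|$, and $M_j(R_j g) \le 2\|M_j\|\, R_j g$, the last giving an $A_1$-type bound. Running two such iterations, one on an $L^r$-type space encoding the $A_{p_j/p_j^-}$ side and one on an $L^{r'}$-type space encoding the $\RH_{(p_j^+/p_j)'}$ side, and combining them via Jones-type factorization produces weights $w_j^{p_j} \in A_{p_j/p_j^-} \cap \RH_{(p_j^+/p_j)'}$ whose relevant norms are controlled by $\|f_j\|_{L^{p_j}(w_j^{p_j};X_j)}$ and by $h$. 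Inserting these weights into the scalar hypothesis for $T(f_1(\cdot,s),\ldots,f_m(\cdot,s))$ pointwise in $s$ and integrating in $s$ then closes the estimate; the $m$-linearity in case \ref{main:1} or positivity in case \ref{main:2} is what legitimizes commuting the test element and the $s$-integration with the action of $T$.

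The decisive ingredient is the boundedness of $M_j$ on the Bochner space used above, and this is precisely where $X_j \in \UMD_{p_j^-,p_j^+}$ enters. By the Bourgain--Rubio de Francia theorem, the $\UMD$ property of a Banach function space $Y$ implies boundedness of the lattice Hardy--Littlewood maximal operator on $L^q(v;Y)$; the concavification--dualization $\bigl((X_j^{p_j^-})^*\bigr)^{(p_j^+/p_j^-)'}$ is engineered exactly so that, after unwinding the concavification and K\"othe duality, this maximal estimate transfers back to the Bochner space on which the Rubio de Francia iteration must run, covering both the $A$ and $\RH$ sides with a single hypothesis. The main technical obstacle I anticipate is keeping this concavification--duality bookkeeping coherent — verifying that one $\UMD$ statement at the exponent $(p_+/p_-)'$ level simultaneously yields the weighted Bochner maximal inequality needed for both halves of $A_{p_j/p_j^-}\cap\RH_{(p_j^+/p_j)'}$. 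Once this is set up cleanly, the iteration runs as a multilinear limited range hybrid of the scheme of \cite{ALV17} and the scalar multilinear argument of \cite{CM17}.
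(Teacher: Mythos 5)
The broad strategy is right — reduce to the scalar hypothesis by K\"othe duality, then construct the necessary weights by a Rubio de Francia iteration on a Bochner lattice whose maximal operator is controlled by the $\UMD_{p_j^-,p_j^+}$ hypothesis — but there are two concrete places where the proposal would not close as written.

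First, you propose running \emph{two} iterations per index $j$ (one ``$A$-side'' and one ``$\RH$-side'') and combining them by a Jones-type factorization. The paper instead runs a \emph{single} iteration per index, on the space $L^{r'/r_+'}\hab{w^{1-r'};Y^{r_+'}}$, and the reason this suffices is Lemma \ref{lem:reverseholder}: the two conditions $v \in A_1$ and $v\in \RH_{r_+'}$ collapse to the single condition $v^{r_+'}\in A_1$. This collapse is exactly what lets one hypothesis $\hab{(X^{p_-})^*}^{(p_+/p_-)'}\in\UMD$ control the whole weight class through one lattice maximal estimate. With two separate iterations you would need the lattice maximal operator to be bounded on two different Bochner lattices built from $X_j$, and there is no reason the single assumption $X_j\in\UMD_{p_j^-,p_j^+}$ delivers both. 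You flag this yourself as ``the main technical obstacle,'' but the resolution is not bookkeeping within the two-iteration scheme; it is to abandon it in favor of the concavified single iteration.

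Second, and more fundamentally, the proposal omits the self-improvement of the $\UMD_{p_-,p_+}$ property (Theorem \ref{thm:UMDopen}), which is indispensable. The Rubio de Francia iteration naturally produces weights of the form $v_j(\cdot,s)w_j^{p_j}\in A_1\cap\RH_\cdot$, which feeds the scalar hypothesis at an auxiliary exponent $q_j$ with $q_j/p_j^-=1$, i.e.\ at the endpoint $q_j=p_j^-$, which is \emph{excluded} from the hypothesis range $(p_j^-,p_j^+)$. To move strictly inside one must first open up to $X_j\in\UMD_{\alpha p_j^-,p_j^+}$ for some $\alpha>1$ (Theorem \ref{thm:UMDopen}) and simultaneously self-improve the weight via Lemma \ref{lem:muckenhoupt}\ref{it:mw3}, so that the iteration can be run at $q_j=\alpha p_j^->p_j^-$. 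Without this $\varepsilon$-room the argument hits the boundary and cannot be completed. Relatedly, the paper's proof is organized in two steps: first establish the vector-valued bound at one specific tuple $p_j=\beta p_j^-$, then appeal to the scalar extrapolation of Cruz-Uribe and Martell applied to the family $\widetilde{\mc{F}}=\{(\nrm{f}_X,\nrm{f_1}_{X_1},\ldots)\}$ to cover all $p_j\in(p_j^-,p_j^+)$; in your outline the range of exponents is treated at once, which blurs where the scalar result is invoked. Finally, in the sublinear case the reduction from simple to general $f_j$ is not automatic from density alone; one needs the telescoping $\absb{\widetilde{T}(f_1,\ldots)-\widetilde{T}(g_1,\ldots)}\leq\widetilde{T}(f_1-g_1,\ldots)$ argument in each slot, which should be spelled out.
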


\begin{remark}\label{rem:mainthm}~
\begin{itemize}
\item If $T$ is a linear operator as in Theorem \ref{thm:maincor}, we have for $f_j \otimes \xi_j \in L^{p_j}(w_j^{p_j}) \otimes X_j$ that
\begin{equation*}
  \widetilde{T}(f_1\otimes \xi_1, \cdots, f_m \otimes \xi_m) = T(f_1,\cdots,f_m) \otimes \xi_1\cdots\xi_m \in L^{p}(w^{p}) \otimes X.
\end{equation*}
So in this case $\widetilde{T}$ is automatically well-defined and strongly measurable for all simple functions $f_j:\R^d \to X$.
  \item Although we state Theorem \ref{thm:maincor} for Banach function spaces, it extends to spaces isomorphic to a closed subspace of a Banach function space and by standard representation techniques also to certain Banach lattices, see \cite{LT79, MN91} for the details.
  \item In \cite{CM17} scalar-valued multilinear limited range extrapolation is proven through off-diagonal extrapolation. Relying on this result, in this paper we prove the vector-valued multilinear limited range result. Our method does not directly generalize to the off-diagonal setting, which leaves vector-valued off-diagonal extrapolation as an open problem.
  \item In Theorem \ref{thm:maincor} one could allow for $p_j^- = 0$. In this case one would have to interpret $X_j \in \UMD_{0,p_j^+}$ as $X_j \in \UMD_{p,p_j^+}$ for some $p\in (0,p_j^+)$.
\end{itemize}
\end{remark}

Even in the linear case $m=1$ our result is new in the sense that it extends the main result of \cite{ALV17} to allow for finite $p_j^+$, which yields many new applications. We are now able to consider, for example, Riesz transforms associated to elliptic operators through the weighted estimates obtained in \cite{AM07}. Many more examples of such operators can also be considered through recent advances in the theory of sparse dominations. Indeed, for example for certain Fourier multipliers such as Bochner-Riesz multipliers as well as for spherical maximal operators, sparse bounds have been found. Sparse bounds naturally imply weighted norm estimates which, through our result, yield bounded vector-valued extensions for such operators. For a more elaborate discussion as well as for references we refer the reader to Section \ref{sec:applications}.

Our result is also new in the full range multilinear case, i.e., if $p_j^-=1$, $p_j^+=\infty$ for all $j\in\{1,\ldots, m\}$. This can, for example, be applied to multilinear Calder\'on-Zygmund operators, as these satisfy the appropriate weighted bounds to apply our result. We elaborate on this in Section \ref{sec:applications}.

Finally, for the case $m=2$ our result yields new results for boundedness of the vector-valued bilinear Hilbert transform $\widetilde{\BHT}$, due to known scalar-valued weighted bounds as were first established by Culiuc, di Plinio, and Ou \cite{CDPO16}.
Bounds for the vector-valued bilinear Hilbert transform $\widetilde{\BHT}$ have useful applications in PDEs, see \cite{BM16} and references therein. The precise result we obtain can be found in Theorem \ref{thm:bilhilbo}.

\begin{remark} \label{rem:intro}
In the recent work \cite{LMO18} of Li, Martell and Ombrosi, and the recent work \cite{N18} of the second author, scalar-valued extrapolation results were obtained using the multilinear weight classes from \cite{LOPTT09}, which were made public after this paper first appeared. Rather than considering a condition for each weight individually, these weight classes allow for an interaction between the various weights, making them more appropriately adapted to the multilinear setting. This gives rise to the problem of extending these results to the vector-valued case. To facilitate this, it seems that an appropriate multilinear $\UMD$ condition on tuples of Banach function spaces is required. We leave this as a basis for future research.
\end{remark}

\bigskip

This article is organized as follows:
\begin{itemize}
  \item In Section \ref{sec:preliminaries} we summarize the preliminaries on Muckenhoupt weights, product quasi-Banach function spaces and the $\UMD$ property.
  \item In Section \ref{sec:UMDp-p+} we discus the $\UMD_{p_-,p_+}$ property and give examples of quasi-Banach function spaces satisfying the $\UMD_{p_-,p_+}$ property.
  \item In Section \ref{sec:main} we prove our main result in terms of $(m+1)$-tuples of functions, proving Theorem \ref{thm:maincor} as a corollary.
  \item In Section \ref{sec:applications} we prove new vector-valued bounds for various operators.
\end{itemize}

\textbf{Acknowledgement.} The authors thank Mark Veraar and Dorothee Frey for their helpful comments on the draft. Moreover the authors thank Alex Amenta for his suggestion to consider the multilinear setting. Finally the authors would like to thank the anonymous referees for their suggestions.

\section{Preliminaries}\label{sec:preliminaries}
\subsection{Muckenhoupt weights}
A locally integrable function $w:\R^d\to (0,\infty)$  is called a \textit{weight}. For $p \in [1,\infty)$ and a weight $w$ the space $L^p(w)$ is the subspace of all measurable functions $f: \R^d \to \C$, which we denote by $f \in L^0(\R^d)$, such that
\begin{equation*}
  \nrm{f}_{L^p(w)}:= \has{\int_{\R^d}\abs{f(x)}^pw(x)\dd x}^{1/p}<\infty.
  \end{equation*}
 By a cube $Q\subseteq\R^d$ we will mean a half-open cube whose sides are parallel to the coordinate axes and for a locally integrable function $f\in L^0(\R^d)$ we will write $\ip{f}_Q:= \frac{1}{\abs{Q}}\int_Q f \dd x$.

For $p\in[1,\infty)$ we will say that a weight $w$ lies in the \emph{Muckenhoupt class $A_p$} and write $w\in A_p$ if it satisfies
\[
[w]_{A_p}:=\sup_{Q}\ip{w}_{Q}\langle w^{1-p'}\rangle_{Q}^{p-1}<\infty,
\]
where the supremum is taken over all cubes $Q\subseteq\R^d$ and the second factor is replaced by $(\essinf_Q w)^{-1}$ if $p=1$. We define $A_\infty:=\bigcup_{p\in[1,\infty)}A_p$.
\begin{lemma}\label{lem:muckenhoupt}Let $p \in [1,\infty)$ and $w \in A_p$.
\begin{enumerate}[(i)]
\item \label{it:mw2} $w \in A_q$ for all $q \in [p,\infty)$ with $[w]_{A_q} \leq [w]_{A_p}$.
    \item \label{it:mw1} If $p>1$, $w^{1-p'} \in A_{p'}$ with $[w]^{\frac{1}{p}}_{A_p} = [w^{1-p'}]_{A_{p'}}^{\frac{1}{p'}}$.
  \item \label{it:mw3} If $p>1$, there exists an $\varepsilon>0$ such that $w \in A_{p-\varepsilon}$ and $[w]_{A_{p-\varepsilon}}\leq C_{p} \, [w]_{A_p}$.
\end{enumerate}
\end{lemma}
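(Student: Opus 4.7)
My plan is to prove the three parts independently, since they correspond to three standard but distinct properties of Muckenhoupt weights, and to flag the reverse Hölder inequality as the one nontrivial ingredient.

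For (i), I would split on whether $p=1$ or $p>1$. When $p=1$, the pointwise bound $w^{1-q'} = w^{-1/(q-1)} \leq (\essinf_Q w)^{-1/(q-1)}$ gives $\langle w^{1-q'}\rangle_Q^{q-1} \leq (\essinf_Q w)^{-1}$, and multiplying by $\langle w\rangle_Q$ yields $[w]_{A_q}\leq [w]_{A_1}$. When $p>1$ and $q\in[p,\infty)$, set $\sigma_r := w^{1-r'} = w^{-1/(r-1)}$ for $r\in\{p,q\}$ and note that $\sigma_q^{(q-1)/(p-1)} = \sigma_p$ with exponent $(q-1)/(p-1)\geq 1$. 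Jensen's inequality applied to the convex function $t\mapsto t^{(q-1)/(p-1)}$ then gives $\langle \sigma_q\rangle_Q^{(q-1)/(p-1)} \leq \langle \sigma_p\rangle_Q$; raising to the $(p-1)$-th power yields $\langle \sigma_q\rangle_Q^{q-1} \leq \langle \sigma_p\rangle_Q^{p-1}$, which is precisely the monotonicity claimed.

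For (ii), the proof is pure algebra. Letting $\sigma := w^{1-p'}$ and using $(p-1)(p'-1)=1$, I compute $\sigma^{1-p} = w^{(1-p')(1-p)} = w$, so
\[
[\sigma]_{A_{p'}} \;=\; \sup_Q \langle \sigma\rangle_Q \langle \sigma^{1-p}\rangle_Q^{p'-1} \;=\; \sup_Q \langle w^{1-p'}\rangle_Q \langle w\rangle_Q^{1/(p-1)} \;=\; [w]_{A_p}^{1/(p-1)}.
\]
Since $1/(p-1) = p'/p$, taking $(1/p')$-th powers gives $[w^{1-p'}]_{A_{p'}}^{1/p'} = [w]_{A_p}^{1/p}$, as required.

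Part (iii) is the main obstacle, and I would reduce it to the classical reverse Hölder inequality for $A_\infty$ weights. By (ii), $\sigma := w^{1-p'}\in A_{p'}\subset A_\infty$ with characteristic controlled by $[w]_{A_p}$, so there exist $\delta>0$ and $C>0$, depending only on $[w]_{A_p}$ and on $p$, such that $\langle \sigma^{1+\delta}\rangle_Q^{1/(1+\delta)} \leq C\,\langle \sigma\rangle_Q$ for every cube $Q$. Now choose $\varepsilon\in(0,p-1)$ small enough that $s := (p-1)/(p-\varepsilon-1) \leq 1+\delta$. A direct computation gives $w^{1-(p-\varepsilon)'} = w^{-1/(p-\varepsilon-1)} = \sigma^{s}$, and by Hölder's inequality followed by reverse Hölder (taking $\delta$-th powers and interpolating if $s<1+\delta$), one obtains $\langle \sigma^s\rangle_Q^{1/s} \leq C'\,\langle\sigma\rangle_Q$. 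Raising to the $s(p-\varepsilon-1) = p-1$ power yields
\[
\langle w^{1-(p-\varepsilon)'}\rangle_Q^{p-\varepsilon-1} \;=\; \langle \sigma^s\rangle_Q^{(p-1)/s} \;\leq\; (C')^{p-1}\langle \sigma\rangle_Q^{p-1},
\]
so multiplying by $\langle w\rangle_Q$ and taking the supremum over cubes delivers $[w]_{A_{p-\varepsilon}}\leq C_p\,[w]_{A_p}$. The only nonelementary input in this whole lemma is the reverse Hölder inequality for $A_\infty$ weights, which is standard and typically proved via a Calderón–Zygmund stopping time argument; I would simply cite it.
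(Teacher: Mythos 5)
Your approach is correct in substance and, notably, supplies actual proofs where the paper just says "immediate from the definition" for (i)--(ii) and cites \cite{Gr14a} and \cite{HPR12} for (iii). Parts (i) and (ii) are exactly the standard Jensen and algebraic manipulations and are fine as written.

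The one place you should be more careful is the constant in (iii). As you have phrased it, the reverse H\"older inequality gives $\langle\sigma^{1+\delta}\rangle_Q^{1/(1+\delta)}\le C\langle\sigma\rangle_Q$ with both $\delta$ \emph{and} $C$ depending on $[\sigma]_{A_{p'}}\sim[w]_{A_p}^{1/(p-1)}$. If $C$ is allowed to depend on the weight, your computation only yields $[w]_{A_{p-\varepsilon}}\le C([w]_{A_p})\,[w]_{A_p}$, which is strictly weaker than the stated bound $[w]_{A_{p-\varepsilon}}\le C_p\,[w]_{A_p}$ with $C_p$ independent of $w$. In fact, that weaker statement has no content: the qualitative openness of $A_p$ already gives $w\in A_{p-\varepsilon}$ for some $\varepsilon>0$, and then one can trivially choose $C$ after the fact. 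The point of (iii), and the reason the paper cites Hyt\"onen--P\'erez--Rela \cite[Theorem 1.2]{HPR12}, is precisely that the constant in front can be taken to depend only on $p$ and the dimension, while all the weight-dependence is absorbed into the size of $\varepsilon$. To make your argument deliver this, you should invoke the \emph{sharp} reverse H\"older inequality in the form: if $\sigma\in A_\infty$, then for $\delta:=c_d/[\sigma]_{A_\infty}$ one has $\langle\sigma^{1+\delta}\rangle_Q^{1/(1+\delta)}\le 2\langle\sigma\rangle_Q$, with absolute constant $2$. Plugging that into your calculation gives $[w]_{A_{p-\varepsilon}}\le 2^{p-1}[w]_{A_p}$ with $\varepsilon$ (but not the prefactor) depending on $[w]_{A_p}$, which is exactly the quantitative form the lemma asserts. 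With that single adjustment your proof is complete and faithful to what the cited reference provides.
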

The first two properties of Lemma \ref{lem:muckenhoupt} are immediate from the definition. For the third see \cite[Corollary 7.2.6]{Gr14a}. The linear estimate of $[w]_{A_{p-\varepsilon}}$ in terms of $[w]_{A_p}$ can be found in \cite[Theorem 1.2]{HPR12}. Note that self-improvement properties for $A_p$ weights are classical. We opt to use this quantitative version of the result for clarity in the proof of our main theorem.

For $s\in[1,\infty)$ we say that $w \in A_\infty$ satisfies a \textit{reverse H\"{o}lder} property and write $w\in\RH_s$ if
\[
[w]_{\RH_s}:=\sup_{Q}\langle w^s\rangle^{\frac1s}_{Q}\langle w\rangle_{Q}^{-1}<\infty.
\]
We will require the following properties of the reverse H\"{o}lder classes, see \cite{JN91}.
\begin{lemma}\label{lem:reverseholder}
  Let $r \in (1,\infty)$, $s \in [1,\infty)$ and define $p = s(r-1)+1$. For $w \in A_\infty$ the following are equivalent
  \begin{enumerate}[(i)]
    \item \label{it:weightsequiv1} $w \in A_r \cap \RH_s$.
    \item \label{it:weightsequiv2} $w^s \in A_p$.
    \item \label{it:weightsequiv3} $w^{1-r'} \in A_{p'}$.
  \end{enumerate}
Moreover we have
 \begin{align*}
  \max\cbraceb{[w]_{\RH_s}^s, [w]_{A_r}^s} &\leq [w^s]_{A_p} \leq \hab{[w]_{A_r}[w]_{\RH_s}}^s.
  \end{align*}
\end{lemma}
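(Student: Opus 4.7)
The plan is to verify the two exponent relations
\[
p-1 = s(r-1) \qquad\text{and}\qquad s(1-p') = 1-r',
\]
both of which follow directly from $p = s(r-1)+1$, and then let everything else be a rewriting of definitions. The first is immediate, and for the second one writes
\[
s(1-p') = -\frac{s}{p-1} = -\frac{s}{s(r-1)} = -\frac{1}{r-1} = 1-r'.
\]
With these identities in hand, the characteristic $[w^s]_{A_p}$ factors in a very convenient way.

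The core step is then to observe that
\[
[w^s]_{A_p} = \sup_Q \langle w^s\rangle_Q \langle w^{1-r'}\rangle_Q^{s(r-1)}
= \sup_Q \bigl(\langle w^s\rangle_Q\langle w\rangle_Q^{-s}\bigr)\cdot\bigl(\langle w\rangle_Q\langle w^{1-r'}\rangle_Q^{r-1}\bigr)^{s},
\]
and similarly that each of the two factors on the right is bounded below by the corresponding supremum, up to using Jensen's inequality $\langle w\rangle_Q^s\leq \langle w^s\rangle_Q$ and the standard Hölder lower bound $1\leq \langle w\rangle_Q\langle w^{1-r'}\rangle_Q^{r-1}$. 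Multiplying the factors cube-by-cube gives the upper estimate
\[
[w^s]_{A_p} \leq \bigl([w]_{A_r}[w]_{\RH_s}\bigr)^s,
\]
and the two cube-by-cube lower estimates together give
\[
\max\bigl\{[w]_{\RH_s}^s,[w]_{A_r}^s\bigr\} \leq [w^s]_{A_p}.
\]
This establishes the equivalence (i)$\Leftrightarrow$(ii) together with the quantitative bound.

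The equivalence (ii)$\Leftrightarrow$(iii) is then a one-line consequence of Lemma \ref{lem:muckenhoupt}\eqref{it:mw1} applied to $v := w^s \in A_p$: duality gives $v^{1-p'} \in A_{p'}$, and from $s(1-p') = 1-r'$ one has $v^{1-p'} = w^{1-r'}$, so $w^s\in A_p$ iff $w^{1-r'}\in A_{p'}$. The main potential pitfall is purely bookkeeping: one must keep track of which quantities live on a fixed cube $Q$ versus under a supremum, since inequalities between suprema can only be obtained from cube-by-cube inequalities; once the factorisation above is in place, all remaining work is algebraic with no analytic obstruction.
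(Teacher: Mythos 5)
The paper does not prove this lemma; it simply cites \cite{JN91} and uses it as a black box. Your argument is a correct, self-contained verification directly from the definitions, and the key identities $p-1 = s(r-1)$, $s(1-p') = 1-r'$, together with the multiplicative splitting
\[
\ip{w^s}_Q\ip{w^{1-r'}}_Q^{s(r-1)}
= \hab{\ip{w^s}_Q\ip{w}_Q^{-s}}\cdot\hab{\ip{w}_Q\ip{w^{1-r'}}_Q^{r-1}}^s,
\]
are exactly the right tools: the upper bound is the cube-by-cube product of the two characteristics, and for each lower bound you drop one factor after bounding it below by $1$ via Jensen ($s\geq1$, so $\ip{w}_Q^s\leq\ip{w^s}_Q$) or via H\"older ($1\leq\ip{w}_Q\ip{w^{1-r'}}_Q^{r-1}$). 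One small wording issue: the phrase ``each of the two factors on the right is bounded below by the corresponding supremum'' reads backwards — each factor is \emph{at most} the corresponding characteristic, and what you actually do (as your next sentences make clear) is bound the \emph{other} factor below by $1$ and then take suprema; the mathematics is fine, but that sentence should be restated. The deduction (ii)$\Leftrightarrow$(iii) via Lemma~\ref{lem:muckenhoupt}\ref{it:mw1} applied to $v = w^s$, using $v^{1-p'} = w^{1-r'}$, is also correct (note $p>1$ holds since $s\geq1$, $r>1$).
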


For $n\in\N$ we will write $\phi_{a,b,\cdots}$ for a non-decreasing function $[1,\infty)^n \to [1,\infty)$, depending on the parameters $a,b,\cdots$ and the dimension $d$. This function may change from line to line. We need non-decreasing dependence on the Muckenhoupt characteristics in our proofs. In \cite[Appendix A]{ALV17} it is shown how to deduce non-decreasing dependence from a more general estimate in terms of the Muckenhoupt characteristics.

\subsection{Banach function spaces}
Let $(S,\mu)$ be a $\sigma$-finite measure space. A subspace $X$ of $L^0(S)$ equipped with a quasi-norm $\nrm{\, \cdot \,}_X$ is called a \emph{quasi-Banach function space} if it satisfies the following properties:
\begin{enumerate}[(i)]
\item If $\xi\in L^0(S)$ and $\eta\in X$ with $\abs{\xi} \leq \abs{\eta}$, then $\xi\in X$ and $\nrm{\xi}_X \leq \nrm{\eta}_X$.
\item There is an $\xi\in X$ with $\xi>0$.
\item If $0
\leq \xi_n \uparrow \xi$ with $(\xi_n)_{n=1}^\infty$ a sequence in $X$, $\xi \in L^0(S)$ and $\sup_{n \in \N}\nrm{\xi_n}_X < \infty$, then $\xi\in X$ and $\nrm{\xi}_X = \sup_{n\in \N}\nrm{\xi_n}_X$.
\end{enumerate}
It is called a \emph{Banach function space} if $\nrm{\, \cdot \,}_X$ is a norm. A Banach function space $X$ is called \textit{order continuous} if for any sequence $0 \leq \xi_n \uparrow \xi \in X$ it holds that $\nrm{\xi_n -\xi}_X \to 0$. Order continuity of a Banach function space $X$ ensures that its dual $X^*$ is again a Banach function space (see \cite[Section 1.b]{LT79}), and that the Bochner space $L^p(S';X)$ is a Banach function space over $(S \times S',\mu\times\mu')$ for any $\sigma$-finite measure space $(S',\mu')$. As an example we note that any reflexive Banach function space is order-continuous.

A quasi-Banach function space $X$ is said to be \emph{$p$-convex} for $p \in (0,\infty]$ if
\begin{equation*}
  \nrms{\has{\sumkn \abs{\xi_k}^p}^\frac{1}{p}}_X \leq \has{\sumkn\nrm{\xi_k}_X^p}^\frac{1}{p}
\end{equation*}
for all $\xi_1,\cdots,\xi_n \in X$ with the usual modification when $p = \infty$. It is said to be \emph{$p$-concave} when the reverse inequality holds. Usually the defining inequalities for $p$-convexity and $p$-concavity include a constant depending on $p$ and $X$, but as shown in \cite[Theorem 1.d.8]{LT79}, $X$ can be renormed equivalently such that these constants equal 1. See \cite[Section 1.d]{LT79} for a thorough introduction of $p$-convexity and concavity in Banach function spaces and see \cite{Ka84} for the quasi-Banach function space case.

We define the \emph{$p$-concavification} of a quasi-Banach function space $X$  for $p \in (0,\infty)$ by
\begin{equation*}
  X^p := \cbrace{\xi \in L^0(S): \abs{\xi}^\frac{1}{p} \in X} = \cbrace{\abs{\xi}^p \sgn{\xi}: \xi \in X},
\end{equation*}
equipped with the quasi-norm $\nrm{\xi}_{X^p} := \nrmb{\abs{\xi}^{\frac{1}{p}}}^p_X$. Note that $X^p$ is a Banach function space if and only if $X$ is $p$-convex. In particular, $X$ is a Banach function space if and only if it is $1$-convex.

For two quasi-Banach function spaces $X_0,X_1$ over the same measure space $(S,\mu)$ we define the vector space $X_0 \cdot X_1$ as
\begin{equation*}
  X_0 \cdot X_1 := \cbrace{\xi_0\cdot \xi_1:\xi_0 \in X_0, \xi_1 \in X_1}
\end{equation*}
and for $\xi\in X_0 \cdot X_1$ we define
\begin{equation*}
  \nrm{\xi}_{X_0 \cdot X_1}:= \inf \cbraceb{\nrm{\xi_0}_{X_0}\nrm{\xi_1}_{X_1}:\abs{\xi} = \xi_0\cdot \xi_1, 0 \leq \xi_0 \in X_0, 0\leq \xi_1 \in X_1}
\end{equation*}
We call $X_0 \cdot X_1$ a \emph{product quasi-Banach function space} if $\nrm{\, \cdot \,}_{X_0 \cdot X_1}$ defines a complete quasi-norm on $X_0 \cdot X_1$.  We will mostly be working with so called \emph{Calder\'on-Lozanovskii} products. These are product quasi-Banach function spaces of the form $X_0^{1-\theta}\cdot X_1^\theta$ for some $\theta \in (0,1)$, see \cite{Ca64,Lo69}. Of course the definition of product quasi-Banach function spaces and Calder\'on-Lozanovskii products can be canonically extended to $m$ quasi-Banach function spaces over the same measure space for any $m \in \N$. We give a few examples of product Banach function spaces, see also \cite{Bu87}.
\begin{example}\label{ex:exproducts}
  Fix $m \in \N$ and let $(S,\mu)$ be an atomless or atomic $\sigma$-finite measure space.
\begin{enumerate}[(i)]
  \item \label{it:exproduct1} Lebesgue spaces: $L^p(S) = L^{p_1}(S) \cdots L^{p_m}(S)$ for $p_j \in (0,\infty)$ and $\frac{1}{p} = \sum_{j=1}^m\frac{1}{p_j}$.
  \item \label{it:exproduct2} Lorentz spaces: $L^{p,q}(S) = L^{p_1,q_1}(S) \cdots L^{p_m,q_m}(S)$ for $p_j,q_j \in (0,\infty)$, $\frac{1}{p} = \sum_{j=1}^m\frac{1}{p_j}$ and $\frac{1}{q} = \sum_{j=1}^m\frac{1}{q_j}$.
  \item \label{it:exproduct3} Orlicz spaces: $L^{\Phi}(S) = L^{\Phi_1}(S)\cdots L^{\Phi_m}(S)$ for Young functions $\Phi_j$ and $\Phi^{-1} = \Phi_1^{-1} \cdots \Phi_m^{-1}$.
\end{enumerate}
\end{example}

We will use the following properties of product Banach function spaces:
\begin{proposition}\label{prop:pBFS} Let $X,X_0,X_1$ be Banach function spaces over a $\sigma$-finite measure space $(S,\mu)$ and let $\theta \in (0,1)$.
\begin{enumerate}[(i)]
    \item \label{it:pBFScomplex} If $X_0$ or $X_1$ is reflexive, then $X_0^{1-\theta}\cdot X_1^\theta=[X_0,X_1]_\theta$.
    \item \label{it:pBFSrefl} If $X_0$ or $X_1$ is reflexive, then $X_0^{1-\theta}\cdot X_1^\theta$ is reflexive.
    \item \label{it:pBFSdual}  $\hab{X_0^{1-\theta}\cdot X_1^\theta}^* = \hab{X_0^*}^{1-\theta}\cdot \hab{X_1^*}^\theta$.
    \item \label{it:pBFSdualconcave} $(X^{\theta})^* = (X^*)^{\theta} \cdot L^{1/(1-\theta)}(S)$.
    \item \label{it:pBFSUMD} If $X_0$ and $X_1$ have the $\UMD$ property, then $X_0^{1-\theta}\cdot X_1^\theta$ has the $\UMD$ property.
  \end{enumerate}
\end{proposition}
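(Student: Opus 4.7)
The proposition collects structural facts about Calderón-Lozanovskii products, and my plan is to reduce the five parts to two classical cornerstones: Calderón's identification of the Calderón-Lozanovskii product with the complex interpolation functor (valid when one of the spaces is order continuous), and Lozanovskii's duality theorem $(X_0^{1-\theta}\cdot X_1^\theta)^*=(X_0^*)^{1-\theta}\cdot(X_1^*)^\theta$. Parts (i) and (iii) are exactly these two theorems, applied after the remark that every reflexive Banach function space is order continuous, so that the reflexivity hypothesis in (i) gives the order-continuity needed for Calderón's theorem to apply. I would cite the original works of Calderón and Lozanovskii for these two ingredients.

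With (i) and (iii) in hand, (ii) follows from the classical interpolation-theoretic fact that $[X_0,X_1]_\theta$ is reflexive whenever one of the endpoints is reflexive. For (iv), the trick is to factor $X^\theta$ as the Calderón-Lozanovskii product $(L^\infty)^{1-\theta}\cdot X^\theta$, which is legitimate because $(L^\infty)^{1-\theta}=L^\infty$ isometrically and multiplication by $L^\infty$-functions preserves any ideal space. Applying (iii) with $X_0=L^\infty$ and $X_1=X$ then yields
\[
(X^\theta)^* = \bigl((L^\infty)^*\bigr)^{1-\theta}\cdot(X^*)^\theta = (L^1)^{1-\theta}\cdot(X^*)^\theta = L^{1/(1-\theta)}\cdot(X^*)^\theta,
\]
using that the Köthe dual of $L^\infty$ is $L^1$ and that the $(1-\theta)$-concavification of $L^1$ is $L^{1/(1-\theta)}$.

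For (v), one exploits that $\UMD$ Banach function spaces are reflexive, so (i) applies to identify $X_0^{1-\theta}\cdot X_1^\theta=[X_0,X_1]_\theta$, and one then invokes the classical fact that the $\UMD$ property is preserved under complex interpolation (most easily proven via the boundedness of the vector-valued Hilbert transform, which interpolates between the endpoint spaces $L^p(X_0)$ and $L^p(X_1)$). The main obstacle throughout is notational bookkeeping around duality: in (iii) and (iv) the dual $X^*$ must consistently be read as the Köthe (associate) dual, since spaces such as $L^\infty$ that appear in the proof of (iv) are not order continuous, and analogously $X^\theta$ need not itself be a Banach function space when $X$ is not $\theta$-convex. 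Once this convention is fixed, the proposition reduces to assembling these classical ingredients together with the short factorization trick for (iv).
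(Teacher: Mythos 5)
Your treatment of parts \ref{it:pBFScomplex}--\ref{it:pBFSdual} and \ref{it:pBFSUMD} matches the paper's: for \ref{it:pBFScomplex} and \ref{it:pBFSdual} the paper cites Calder\'on \cite{Ca64} and Lozanovskii \cite[Theorem 2]{Lo69} directly; for \ref{it:pBFSrefl} the paper cites \cite[Theorem 3]{Lo69} but also explicitly mentions your alternative route through complex interpolation; and for \ref{it:pBFSUMD} the paper combines \ref{it:pBFScomplex} with the interpolation stability of $\UMD$ from \cite[Proposition 4.2.17]{HNVW16}, exactly as you do. The one genuine divergence is \ref{it:pBFSdualconcave}, where the paper simply cites \cite[Theorem 2.9]{Sc10}, whereas you derive it from \ref{it:pBFSdual} via the factorization $X^\theta=(L^\infty)^{1-\theta}\cdot X^\theta$ (using $(L^\infty)^{1-\theta}=L^\infty$ and $L^\infty\cdot Y=Y$), followed by the identifications $(L^\infty)^*=L^1$ and $(L^1)^{1-\theta}=L^{1/(1-\theta)}$. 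This is a clean, self-contained argument that makes \ref{it:pBFSdualconcave} a corollary of Lozanovskii duality rather than an independent citation. You are also right to flag the bookkeeping point: since $L^\infty$ is not order continuous, the dual in \ref{it:pBFSdual} and \ref{it:pBFSdualconcave} must be read throughout as the K\"othe (associate) dual --- which is how Lozanovskii's theorem is stated and how the argument is carried out in the reference --- rather than as the topological Banach-space dual, for which $(L^\infty)^*=L^1$ would be false.
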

Part \ref{it:pBFScomplex} follows from \cite{Ca64}, it has been extended to the product quasi-Banach function space setting in \cite{KMM07,KM98}. Part \ref{it:pBFSrefl} is proven in \cite[Theorem 3]{Lo69}. It also follows from \cite{Ca64} through complex interpolation. Part \ref{it:pBFSdual} is proven in \cite[Theorem 2]{Lo69} and for \ref{it:pBFSdualconcave} see \cite[Theorem 2.9]{Sc10}. Finally part \ref{it:pBFSUMD} follows from part \ref{it:pBFScomplex} and \cite[Proposition 4.2.17]{HNVW16}, see also the next section on the $\UMD$ property.

\subsection{The \texorpdfstring{$\UMD$}{UMD} property}
We say that a Banach space $X$ has the $\UMD$ property if the martingale difference sequence of any finite martingale in $L^p(\Omega;X)$ is unconditional for some (equivalently all) $p \in (1,\infty)$. The $\UMD$ property implies reflexivity and if $X$ has the $\UMD$ property, then $X^*$ has the $\UMD$ property as well. Standard examples of Banach spaces with the $\UMD$ property include reflexive $L^p$-spaces, Lorentz spaces, Orlicz spaces,  Sobolev spaces, Besov spaces and Schatten classes. For a thorough introduction to the theory of $\UMD$ spaces we refer the reader to \cite{Bu01,HNVW16}.

Throughout this paper we will consider Banach function spaces with the $\UMD$ property.
  In this case we have a characterisation of the $\UMD$ property in terms of the lattice Hardy-Littlewood maximal operator, which for simple $f:\R^d \to X$ is defined by
 \begin{equation*}
   \widetilde{M}f(x) := \sup_{Q} \ip{\abs{f}}_{Q} \ind_{Q}(x),
 \end{equation*}
 where the supremum is taken over all cubes $Q \subseteq\R^d$ (see \cite{GMT93} for the details). The boundedness of $\widetilde{M}$ on both $L^p(\R^d;X)$ and $L^p(\R^d;X^*)$ for some (equivalently all) $p \in (1,\infty)$ is equivalent to $X$ having the $\UMD$ property by a result of Bourgain \cite{Bo84} and Rubio de Francia \cite{Ru86}. Moreover, if $X$ has the $\UMD$ property we have the following weighted bound for all $p \in (1,\infty)$, $w \in A_p$ and $f \in L^p(w;X)$
 \begin{equation}\label{eq:maximalbdd}
   \nrmb{\widetilde{M}f}_{L^p(w;X)} \leq \inc_{X,p}([w]_{A_p}) \nrmb{f}_{L^p(w;X)},
 \end{equation}
 see \cite{GMT93}. A more precise dependence on the weight characteristic can be found in \cite{HL17}.

The $\UMD$ property of a Banach function space $X$ implies that certain $q$-concavifications of $X$ also have the $\UMD$ property, see \cite[Theorem 4]{Ru86}.
\begin{proposition}[Rubio de Francia]\label{prop:UMDopenRubio}
Let $X$ be a Banach function space over a $\sigma$-finite measure space $(S,\mu)$ such that $X$ has the $\UMD$ property. Then there exists an $\varepsilon>0$ such that such that $X^q$ has the $\UMD$ property for all $0<q<1+\varepsilon$.
\end{proposition}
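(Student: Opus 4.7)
The plan is to verify the Bourgain--Rubio de Francia characterisation of the $\UMD$ property: a Banach function space $Y$ has $\UMD$ if and only if the lattice Hardy--Littlewood maximal operator $\widetilde{M}$ is bounded on $L^p(\R^d;Y)$ and on $L^p(\R^d;Y^*)$ for some (equivalently all) $p\in(1,\infty)$. Since $X\in\UMD$ is super-reflexive, $X$ and $X^*$ are $(1+\delta)$-convex for some $\delta>0$, ensuring $X^q$ is a Banach function space for every $q\in(0,1+\delta]$.

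First I would handle $q\in(0,1]$. The primal bound comes from the pointwise Jensen estimate
\[
(\widetilde{M}f(x,s))^{1/q}\;\leq\;\widetilde{M}(|f|^{1/q})(x,s),
\]
arising from convexity of $t\mapsto t^{1/q}$, together with the isometric identity $\|g\|_{L^p(X^q)}^p = \||g|^{1/q}\|_{L^{pq}(X)}^{pq}$ and the boundedness of $\widetilde{M}$ on $L^{pq}(\R^d;X)$ (guaranteed for $pq>1$ by $\UMD$ of $X$). This yields the required bound on $L^p(X^q)$ whenever $p>1/q$. The dual-side bound is obtained analogously, using Proposition \ref{prop:pBFS}\ref{it:pBFSdualconcave} to write $(X^q)^* = (X^*)^q\cdot L^{1/(1-q)}$, applying the Jensen argument to the $\UMD$ space $X^*$, and exploiting H\"older's inequality to accommodate the Lebesgue-type factor.

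Next I would address $q\in(1,1+\varepsilon)$ by duality. Because $\UMD$ is a self-dual property, it is equivalent to show that $(X^q)^*$ has $\UMD$. Via Lozanovskii factorisation together with Proposition \ref{prop:pBFS}\ref{it:pBFSdual} and \ref{it:pBFSdualconcave}, for $q$ close to $1$ the dual space $(X^q)^*$ can be represented as a Calder\'on--Lozanovskii product of a concavification $(X^*)^r$ with $r\in(0,1)$ close to $1$ (which is $\UMD$ by the previous step applied to $X^*$) and an $L^s$-factor with $s\in(1,\infty)$ (which is $\UMD$). Preservation of $\UMD$ under Calder\'on--Lozanovskii products (Proposition \ref{prop:pBFS}\ref{it:pBFSUMD}) then yields $(X^q)^*\in\UMD$, hence $X^q\in\UMD$.

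The principal obstacle is the range $q>1$, where the Jensen estimate used in the first step reverses direction and is useless. The duality route requires a precise identification of $(X^q)^*$ as a Calder\'on--Lozanovskii product whose factors are already known to have $\UMD$; verifying this Lozanovskii-type decomposition and choosing $\varepsilon>0$ small enough that all intermediate spaces are well-defined Banach function spaces is the technical heart of the argument.
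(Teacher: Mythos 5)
The paper does not give a proof of Proposition~\ref{prop:UMDopenRubio}; it cites Rubio de Francia's \cite[Theorem 4]{Ru86} and explicitly remarks that the nontrivial content is the range $1<q<1+\varepsilon$. Your proposal correctly isolates that same case as the crux, but your Step~2 contains a genuine gap.

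The problem is the claimed representation of $(X^q)^*$ for $q>1$ as a Calder\'on--Lozanovskii product of $(X^*)^r$ with $r\in(0,1)$ and an $L^s$-factor, i.e.\ as a space of the form $(X^*)^a\cdot L^b$. Lozanovskii's factorisation $X\cdot X'=L^1$ rules this out for general $X$. Indeed, if $(X^q)^*=(X^*)^r\cdot L^s$ with $0<r<1<q$, then applying $X^q\cdot(X^q)^*=L^1$, the identity $X^q=X^{q-r}\cdot X^r$, and $X^r\cdot(X^*)^r=(X\cdot X^*)^r=L^{1/r}$ gives
\[
L^1 \;=\; X^{q-r}\cdot L^{1/r}\cdot L^s \;=\; X^{q-r}\cdot L^{u},\qquad \tfrac{1}{u}=r+\tfrac{1}{s},
\]
which forces $X^{q-r}=L^{u'}$ and hence $X=L^{u'/(q-r)}$. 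So the factorisation you invoke holds only when $X$ is already a Lebesgue space. The tool the paper makes available, Proposition~\ref{prop:pBFS}\ref{it:pBFSdualconcave} applied to $X=(X^q)^{1/q}$, yields $(X^*)^q=(X^q)^*\cdot L^{1/(q-1)}$, which points in the \emph{opposite} direction: you cannot generically ``divide out'' the $L^{1/(q-1)}$ factor to isolate $(X^q)^*$. This is precisely why the $q>1$ case is deep; Rubio de Francia's original argument proceeds through the boundedness of the lattice maximal operator together with a weighted self-improvement argument rather than a duality reduction to the concavification range.

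Two smaller points. In Step~1 the primal bound via Jensen, $\bigl(\widetilde{M}f\bigr)^{1/q}\le\widetilde{M}\bigl(|f|^{1/q}\bigr)$ for $q\le 1$, and the renorming $\|g\|^p_{L^p(X^q)}=\bigl\||g|^{1/q}\bigr\|^{pq}_{L^{pq}(X)}$ are fine. The dual-side bound, however, is sketchier than you suggest: to pass from a pointwise H\"older splitting $\widetilde{M}g\le\bigl(\widetilde{M}(|g_1|^{1/q})\bigr)^q\bigl(M(|g_2|^{1/(1-q)})\bigr)^{1-q}$ to a norm estimate in $L^{p'}\bigl(\R^d;(X^*)^q\cdot L^{1/(1-q)}(S)\bigr)$ you need a Bochner-space product identity in the spirit of Lemma~\ref{lemma:pBFSbochner}, and that lemma is stated for reflexive factors, whereas the natural Calder\'on--Lozanovskii decomposition $(X^*)^q\cdot(L^1)^{1-q}$ has a non-reflexive leg. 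This is fixable, but it is not as routine as ``exploiting H\"older's inequality''. The Step~2 gap, though, is the substantive one: as stated the argument cannot produce the $\varepsilon>0$ beyond $q=1$, which is the whole point of the proposition.
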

Note that the difficult part of Proposition \ref{prop:UMDopenRubio} is the claim that $X^q$ has the $\UMD$ property for  $1<q<1+\varepsilon$.

\section{The \texorpdfstring{$\UMD_{p_-,p_+}$}{UMDp+p-} property of quasi-Banach function spaces}\label{sec:UMDp-p+}
For our main result we need an extension of the $\UMD$ property, as we will often consider quasi-Banach function spaces of which a concavification has the $\UMD$ property. In particular, we will use the following notion:
\begin{definition}\label{def:UMDp-p+}
   Let $X$ be a quasi-Banach function space and let $0<p_-<p_+\leq\infty$. Then we say $X$ has the $\UMD_{p_-,p_+}$ property if and only if $X$ is $p_-$-convex, $p_+$-concave and $\hab{(X^{p_-})^*}^{(p_+/p_-)'}$ has the $\UMD$ property. We denote this by $X \in \UMD_{p_-,p_+}$.
\end{definition}
Note that $X$ is a Banach function space with the $\UMD$ property if and only if $X \in \UMD_{1,\infty}$ and we denote this by $X \in \UMD$.

\begin{remark}~\label{rem:UMDp-p+}
\begin{itemize}
  \item The $p_-$-convexity in Definition \ref{def:UMDp-p+} implies that $X^{p_-}$ is a Banach function space, so its dual $(X^{p_-})^*$ is non-trivial. Moreover $(X^{p_-})^*$ is a Banach function space, since it has the $\UMD$ property by Proposition \ref{prop:UMDopenRubio} and is therefore reflexive, which implies that $X^{p_-}$ is order-continuous.
  \item The $p_+$-concavity assumption in Definition \ref{def:UMDp-p+} is not restrictive, as any quasi-Banach function space with the $\UMD$ property is actually isomorphic to a Banach function space (see \cite{CCV18}), which implies that $(X^{p_-})^*$ is $(p_+/p_-)'$-convex and thus that $X$ is $p_+$-concave by \cite[Section 1.d]{LT79}
\end{itemize}
\end{remark}

We first show some basic results for the $\UMD_{p_-,p_+}$ property.
\begin{proposition} Fix $0<p_-<p_+ \leq \infty$ and let $X$ be a quasi-Banach function space over a $\sigma$-finite measure space $(S,\mu)$ such that $X \in \UMD_{p_-,p_+}$.
\begin{enumerate}[(i)]\label{prop:UMDp+p-}
\item \label{it:UMDrescale}$X^{p_-} \in \UMD_{1,p_+/p_-}$.
\item \label{it:UMDdual}$X^* \in  \UMD_{p_+',p_-'}$ if $p_- \geq 1$.
\item \label{it:UMDinterval}$X \in \UMD_{\tilde{p}_-,\tilde{p}_+}$ for all $\tilde{p}_- \in (0,p_-]$ and $\tilde{p}_+ \in [p_+,\infty]$.
\item \label{it:UMDinterp} If $1<p_-<p_+<\infty$, then $X = [Y,L^2(S)]_{\theta}$ for a Banach function space $Y \in \UMD$ and $\theta = 2/\max\cbrace{p_-',p_+}$.
\item \label{it:UMDLp}$L^p(S';X) \in \UMD_{p_-,p_+}$ for all $p \in (p_-,p_+)$ and any $\sigma$-finite measure space $(S',\mu')$.
\end{enumerate}
\end{proposition}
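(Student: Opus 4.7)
Parts (i) and (iii) are essentially definitional bookkeeping, and I would dispatch them first. For (i), the identity $(X^{p_-})^{p_+/p_-} = X^{p_+}$ converts $p_+$-concavity of $X$ into $(p_+/p_-)$-concavity of $X^{p_-}$; $1$-convexity of $X^{p_-}$ is just the statement that it is a Banach function space, which follows from $p_-$-convexity of $X$; and the $\UMD$ clause for $X^{p_-} \in \UMD_{1,p_+/p_-}$ reads $\bigl(((X^{p_-})^1)^*\bigr)^{(p_+/p_-)'} \in \UMD$, which literally is the $\UMD$ clause defining $X \in \UMD_{p_-,p_+}$. For (iii), I would note that $p_-$-convexity and $p_+$-concavity pass to wider intervals trivially, so the only real content is the $\UMD$ clause: shrinking $p_-$ to $\tilde p_-$ and enlarging $p_+$ to $\tilde p_+$ makes $(\tilde p_+/\tilde p_-)'$ smaller than $(p_+/p_-)'$, so $\bigl((X^{\tilde p_-})^*\bigr)^{(\tilde p_+/\tilde p_-)'}$ can be realized as a small $q$-concavification (with $q$ close to $1$) of the given $\UMD$ space, possibly combined via Calder\'on-Lozanovskii with a reflexive $L^r$-factor; Propositions \ref{prop:UMDopenRubio} and \ref{prop:pBFS}(v) then close the gap, possibly after reducing to the case $\tilde p_- = p_-$ via (i).

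For (ii) the main ingredient is duality: the $p_+'$-convexity and $p_-'$-concavity of $X^*$ are dual to the $p_+$-concavity and $p_-$-convexity of $X$ by the standard Lindenstrauss--Tzafriri duality, and for the $\UMD$ clause I would repeatedly apply Proposition \ref{prop:pBFS}(iii)--(iv) to unfold $\bigl(((X^*)^{p_+'})^*\bigr)^{(p_-'/p_+')'}$ into a Calder\'on-Lozanovskii product of $\bigl((X^{p_-})^*\bigr)^{(p_+/p_-)'}$ (or its dual) with a reflexive $L^r$-factor, after which self-duality of $\UMD$ on Banach function spaces together with Proposition \ref{prop:pBFS}(v) concludes. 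For (v), I would use the direct norm computation $L^p(S';X)^q = L^{p/q}(S';X^q)$ and, via the reflexivity of $X^{p_-}$ (whose dual is $\UMD$ hence reflexive), the Bochner duality $\bigl(L^r(S';Y)\bigr)^* = L^{r'}(S';Y^*)$. Composing these reduces the $\UMD$ clause for $L^p(S';X)$ to the requirement $L^r\bigl(S';\bigl((X^{p_-})^*\bigr)^{(p_+/p_-)'}\bigr) \in \UMD$ with $r = (p/p_-)'/(p_+/p_-)'$; an elementary computation shows $r \in (1,\infty)$ precisely when $p \in (p_-,p_+)$, and the classical fact that $L^r(S';Z)$ is $\UMD$ whenever $Z \in \UMD$ and $r \in (1,\infty)$ finishes the argument.

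The main obstacle will be (iv). The plan is to fix $\theta = 2/\max\{p_-',p_+\}$ and \emph{define} $Y$ directly as the Banach function space making $X = Y^{1-\theta} \cdot L^{2/\theta}(S)$ hold; Proposition \ref{prop:pBFS}(i) (with $L^2(S)$ reflexive) will then recognize this as $X = [Y,L^2(S)]_\theta$. Concretely, $Y$ has to be reverse-engineered from the Calder\'on-Lozanovskii calculus, with the two cases $p_+ \geq p_-'$ and $p_+ < p_-'$ producing mirror-image formulas. The specific choice of $\theta$ is forced by the requirement that $Y$ be a genuine Banach function space (rather than only quasi-Banach) and simultaneously admit the $\UMD$ property; the latter I would verify by manipulating $Y$ via concavification and Lozanovskii products into the form of $\bigl((X^{p_-})^*\bigr)^{(p_+/p_-)'}$ or its dual, tensored with reflexive $L^r$-factors, so that Proposition \ref{prop:pBFS}(v) applies. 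The main technical difficulty will be the careful exponent accounting in the two cases and the verification that the resulting $Y$ really lies in $\UMD$ under the stated hypotheses $1<p_-<p_+<\infty$.
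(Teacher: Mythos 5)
Your proposal is correct and follows essentially the same route as the paper: parts (i) and (iii) by direct unfolding of the definition plus Propositions \ref{prop:UMDopenRubio} and \ref{prop:pBFS}, part (ii) by duality of convexity/concavity and the Calder\'on--Lozanovskii calculus (the stray $L$-factor in the paper's computation turns out to be $L^\infty(S)$ and simply drops out), part (v) by $L^p(S';X)^q = L^{p/q}(S';X^q)$, Bochner duality, and the standard $\UMD$ stability of $L^r(S';Z)$, with your exponent $r$ matching the paper's. The one place the paper is slicker is (iv): rather than reverse-engineering $Y$ with a case split on $p_+\gtrless p_-'$, it first applies (iii) to replace $(p_-,p_+)$ by the symmetric range $(p',p)$ with $p=\max\{p_-',p_+\}$, after which $Y := \has{\hab{(X^{p'})^*}^{(p/p')'}}^*$ works uniformly and $X=[Y,L^2(S)]_{2/p}$ is a one-line verification; your plan would reach the same $Y$ but with more bookkeeping.
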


\begin{proof}
  Part \ref{it:UMDrescale} follows directly from the definition. For part \ref{it:UMDdual} the $p_+'$-convexity and $p_-'$-concavity follow from \cite[Section 1.d]{LT79}. If $p_- =1$, the claim is that $\hab{(X^*)^{p_+'}}^* \in \UMD$, which is clear. Assuming $p_->1$, we have by Proposition \ref{prop:pBFS}
\begin{align*}
  \hab{\hab{(X^*)^{p_+'}}^*}^{(p'_-/p_+')'} &= \has{\has{\hab{(X^{p_-})^*}^{p_+'/p_-}\cdot L^{p_-'/p_+'}(S)}^*}^{(p'_-/p_+')'} \\
  &= \has{\has{\hab{(X^{p_-})^*}^{(p_+/p_-)'\theta}\cdot L^1(S)^{1-\theta}}^*}^{1/\theta}\\
  &= \has{\hab{(X^{p_-})^*}^{(p_+/p_-)'}}^*\cdot  \hab{L^\infty(S)}^{(1-\theta)/\theta}\\
  &= \has{\hab{(X^{p_-})^*}^{(p_+/p_-)'}}^*
\end{align*}
with $\theta := \frac{1}{(p'_-/p_+')'}<1$, since taking a product with $L^\infty(S)^{(1-\theta)/\theta}=L^\infty(S)$ has no effect on the space. Thus we conclude that $X^* \in  \UMD_{p_+',p_-'}$.

  For part \ref{it:UMDinterval} the $p_+'$-convexity and $p_-'$-concavity follow from \cite[Theorem 4.2]{Ma04}. First assume that $p_-=1$ and let $\tilde{p}_- \in (0,1)$.
  By Proposition \ref{prop:pBFS}\ref{it:pBFSdualconcave} we have
  \begin{align}\label{eq:UMDp-p+interval}
    \hab{(X^{\tilde{p}_-})^*}^{(p_+/\tilde{p}_-)'} = \ha{X^*}^{\tilde{p}_-(p_+/\tilde{p}_-)' } \cdot L^{\frac{p_+-\tilde{p}_-}{(1-\tilde{p}_-)p_+}}(S) = (X^*)^{p_+'\theta} \cdot L^{1}(S)^{1-\theta}
  \end{align}
  with
  \begin{equation*}
    \theta := \frac{\tilde{p}_-(p_+-1)}{p_+-\tilde{p}_-} <1.
  \end{equation*}
  By assumption $(X^*)^{p_+'} \in \UMD$, so
  \begin{equation*}
    \bracb{(X^*)^{p_+},L^1(S)}^*_\theta = \bracb{\hab{(X^*)^{p_+'}}^*,L^\infty(S)}_\theta = \has{\hab{(X^*)^{p_+'}}^*}^\theta \in \UMD
  \end{equation*}
  by Proposition \ref{prop:pBFS} and Proposition \ref{prop:UMDopenRubio}. Using Proposition \ref{prop:pBFS}\ref{it:pBFScomplex}, we obtain from \eqref{eq:UMDp-p+interval} that $X \in \UMD_{\tilde{p}_-,p_+}$.
  For arbitrary $0<p_-<p_+ \leq \infty$ we know that $X \in \UMD_{\tilde{p}_-,p_+}$ for all $\tilde{p}_- \in (0,p_-]$ by \ref{it:UMDrescale} and Proposition \ref{prop:UMDopenRubio} yields that $X \in \UMD_{\tilde{p}_-,\tilde{p}_+}$ for all $\tilde{p}_+ \in [p_+,\infty]$.

For part \ref{it:UMDinterp} note that $X \in \UMD_{p',p}$ with $p = \max\cbrace{p_-',p_+}$ by part \ref{it:UMDinterval}. Therefore
\begin{equation*}
Y  := \has{\hab{(X^{p'})^*}^{(p/p')'}}^* \in \UMD.
\end{equation*}
Then using Proposition \ref{prop:pBFS} we have
\begin{equation*}
  X = \has{\hab{\ha{X^{p'}}^*}^{1/{p'}}\cdot L^{p}(S)}^* = \bracb{ \hab{\ha{X^{p'}}^*}^{(p/p')'}, L^2(S)}_{2/p}^* = \brac{ Y , H}_{\theta}.
\end{equation*}

  Finally part \ref{it:UMDLp} follows from \cite[Proposition 4.2.15]{HNVW16} as
  \begin{equation*}
    \hab{\hab{L^p(S';X)^{p_-}}^*}^{(p_+/p_-)'} = L^{\frac{(p_+-p_-)p}{p_+(p-p_-)}}\hab{S';\hab{(X^{p_-})^*}^{(p_+/p_-)'}}. \qedhere
  \end{equation*}
\end{proof}

Next we note how product quasi-Banach function spaces work under the $\UMD_{p_-,p_+}$ property. In particular the following result describes some properties of the space $X$ in our main theorem, Theorem \ref{thm:maincor}.
\begin{proposition}
  Let $X_1,\cdots,X_m$ be quasi-Banach function spaces. For $j=1,\cdots,m$ let $0<p_j^-<p_j^+\leq \infty$ and assume that $X_j \in \UMD_{p_j^-,p_j^+}$. Let $X = X_1\cdots X_m$, then $X \in \UMD_{p_-,p_+}$, where $\frac{1}{p_-}:= \sum_{j=1}^m \frac{1}{p_j^-}$ and $\frac{1}{p_+}:= \sum_{j=1}^m \frac{1}{p_j^+}$.
\end{proposition}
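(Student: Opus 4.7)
The approach is to recast both $X^{p_-}$ and $\hab{(X^{p_-})^*}^{(p_+/p_-)'}$ as Calder\'on--Lozanovskii products of known UMD spaces, so that the three conditions defining $\UMD_{p_-,p_+}$ reduce to structural properties collected in Proposition~\ref{prop:pBFS}. Set $Y_j := X_j^{p_j^-}$, which is a Banach function space by the $p_j^-$-convexity of $X_j$, and put $\theta_j := p_-/p_j^-$; the hypothesis $1/p_- = \sum_{j=1}^m 1/p_j^-$ is precisely the condition $\sum_{j=1}^m \theta_j = 1$. Substituting $\eta_j = \psi_j^{1/p_j^-}$ in the infimum defining $\nrm{\,\cdot\,}_{X^{p_-}}$ yields the isometric identity
\[
X^{p_-} = Y_1^{\theta_1} \cdots Y_m^{\theta_m},
\]
which in particular is a Banach function space, so that $X$ is $p_-$-convex.

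Each $Y_j$ is reflexive: the UMD property of $(Y_j^*)^{(p_j^+/p_j^-)'}$ passes to $Y_j^*$ via Proposition~\ref{prop:UMDopenRubio}, and UMD implies reflexivity. Iterating Proposition~\ref{prop:pBFS}\ref{it:pBFSdual} then gives
\[
(X^{p_-})^* = (Y_1^*)^{\theta_1} \cdots (Y_m^*)^{\theta_m}.
\]
Write $r := (p_+/p_-)'$, $r_j := (p_j^+/p_j^-)'$, and $\tilde\theta_j := r\theta_j/r_j$, with the conventions $r = 1$ if $p_+ = \infty$ and $r_j = 1$ if $p_j^+ = \infty$. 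Combining $1/p_- = \sum 1/p_j^-$ with $1/p_+ = \sum 1/p_j^+$ gives $\theta_j/r_j = p_-/p_j^- - p_-/p_j^+$, whence $\sum_{j=1}^m \tilde\theta_j = r(1 - p_-/p_+) = 1$. A second bookkeeping argument, once again unfolding the definitions of product and concavification, then establishes the key identity
\[
\bigl((X^{p_-})^*\bigr)^r = \bigl((Y_1^*)^{r_1}\bigr)^{\tilde\theta_1} \cdots \bigl((Y_m^*)^{r_m}\bigr)^{\tilde\theta_m}.
\]

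Each factor $(Y_j^*)^{r_j}$ has the UMD property by the assumption $X_j \in \UMD_{p_j^-, p_j^+}$, so iterating Proposition~\ref{prop:pBFS}\ref{it:pBFSUMD} shows that $\hab{(X^{p_-})^*}^{(p_+/p_-)'}$ has UMD. Being UMD it is in particular a Banach function space, so that $(X^{p_-})^*$ is $(p_+/p_-)'$-convex; the standard convexity--concavity duality \cite[Section 1.d]{LT79} then yields that $X^{p_-}$ is $(p_+/p_-)$-concave, which is equivalent to $p_+$-concavity of $X$. Together with the $p_-$-convexity shown above, this gives $X \in \UMD_{p_-,p_+}$.

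The main obstacle is the concavification identity in the second paragraph: a purely algebraic but delicate verification in which one must distinguish the two roles of the exponents (as concavification parameters in the outer $(\,\cdot\,)^r$ and as exponents inside a Calder\'on--Lozanovskii product) and check that the rescaled weights $\tilde\theta_j$ again sum to one. Once this identity is in hand, the argument reduces to repeated applications of Proposition~\ref{prop:pBFS}, with the edge cases $p_+ = \infty$ or $p_j^+ = \infty$ handled uniformly through the conventions $r = 1$, $r_j = 1$.
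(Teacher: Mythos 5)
Your proof is correct and follows essentially the same route as the paper: both establish $X^{p_-}$ as a Calder\'on--Lozanovskii product of the $Y_j = X_j^{p_j^-}$, dualize and reconcavify using Proposition~\ref{prop:pBFS}, and conclude via Proposition~\ref{prop:pBFS}\ref{it:pBFSUMD}. The paper writes out the computation only for $m=2$ (with the exponent $\theta$ matching your $\tilde\theta_2$) and defers to induction, and it cites Remark~\ref{rem:UMDp-p+} for $p_+$-concavity where you spell out the convexity--concavity duality directly; these are expository rather than substantive differences.
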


\begin{proof}
  We will prove the proposition for $m=2$. The general case can be proven by induction, cf. the proof of Lemma \ref{lemma:pBFSbochner}. First note that $X^{p_-} = X_1^{p_1^-(p_-/p_1^-)}\cdot X_2^{p_2^-(p_-/p_2^-)}$ is a Banach function space by assumption, so $X$ is $p_-$-convex. By Proposition \ref{prop:pBFS} we have
  \begin{align*}
    \hab{(X^{p_-})^*}^{(p_+/p_-)'} &= \hab{(X_1^{p_-} \cdot X_2^{p_-})^*}^{(p_+/p_-)'}\\ &= \hab{(X_1^{p^-_1})^*}^{(p_1^+/p_1^-)'(1-\theta)} \cdot \hab{(X_2^{p^-_2})^*}^{(p_2^+/p_2^-)'\theta}\\
  \end{align*}
  with
  \begin{equation*}
    \theta = \frac{\frac{1}{p_2^-}-\frac{1}{p_2^+}}{\frac{1}{p_-}-\frac{1}{p_+}}.
  \end{equation*}
  Thus by Proposition \ref{prop:pBFS}\ref{it:pBFSUMD} and Remark \ref{rem:UMDp-p+} we know that $X \in \UMD_{p_-,p_+}$.
\end{proof}

The $\UMD_{p_-,p_+}$ property of a quasi-Banach function space $X$ looks quite technical. However, as we will see in the next example, this abstract assumption is quite natural for concrete examples of Banach function spaces.

\begin{example}\label{ex:concavification}
Let $0 < p_- <p_+\leq \infty$ and let $X$ be a quasi-Banach function space over an atomless or atomic $\sigma$-finite measure space $(S,\mu)$. Then $X \in  \UMD_{p_-,p_+}$ in each of the following cases:
\begin{enumerate}[(i)]
  \item \label{it:exconcavification1} The Lebesgue spaces $X=L^p(S)$ for $p \in (p_-,p_+)$.
  \item \label{it:exconcavification2} The Lorentz spaces $X=L^{p,q}(S)$ with $p,q \in (p_-,p_+)$.
  \item \label{it:exconcavification3} The Orlicz spaces $X=L^\Phi(S)$ for which $t \mapsto \Phi(t^{1/p})$ is a convex function and $t\mapsto \Phi(t^{1/q})$ is a concave function with $p,q \in (p_-,p_+)$.
\end{enumerate}
 Note that Theorem \ref{thm:maincor} for the Lebesgue spaces described in Example \ref{ex:concavification}\ref{it:exconcavification1} follows directly from scalar-valued limited range extrapolation using Fubini's theorem, see also \cite{CMP11}.
\end{example}

\begin{proof}
Note that \ref{it:exconcavification1} is a special case of \ref{it:exconcavification2}. For \ref{it:exconcavification2} the $p_-$-convexity and $p_+$-concavity follow from \cite[Theorem 4.4 and Theorem 5.1]{Ma04}. Furthermore by the definition of $L^{p,q}(S)$ and the duality of Lorentz spaces (see \cite{Hu66}) we have that
  \begin{equation*}
    \bigl((X^{p_-})^*\bigr)^{(p_+/p_-)'} = \bigl(\hab{L^{p/{p_-},q/p_-}(S)}^*\bigr)^{(p_+/p_-)'} = L^{\frac{(p_+-p_-)p}{p_+(p-p_-)},\frac{(p_+-p_-)q}{p_+(q-p_-)}}(S).
  \end{equation*}
Since $L^{r,s}(S) \in \UMD$ for $r,s \in (1,\infty)$ (see \cite{HNVW16}), this proves \ref{it:exconcavification2}.

 For \ref{it:exconcavification3} note that $L^\Phi(S)$ is $p$-convex and $q$-concave by \cite{Ka98}. So $Y := \bigl((X^{p_-})^*\bigr)^{(p_+/p_-)'}$ is  $\frac{(p_+-p_-)p}{p_+(p-p_-)}$-convex and $\frac{(p_+-p_-)q}{p_+(q-p_-)}$-concave. By \cite[Theorem 1.f.1]{LT79} this implies that both $Y$ and $Y^*$ are uniformly convex. Note that $Y$ is an Orlicz space with Young function $\Psi(t) = \varphi^*(t^{(p_+-p_-)/p_-})$, where $\varphi(t) = \Phi(t^{1/p_-})$, and $Y^*$ is an Orlicz space with Young function $\Psi^*$. Therefore we know by \cite[Proposition 1]{Ka98} that both $\Psi$ and its conjugate function $\Psi^*$  satisfy the $\Delta_2$-condition. Thus it follows from \cite[Theorem 6.2]{FG91} that $Y \in \UMD$.
\end{proof}

We end our discussion of the $\UMD_{p_-,p_+}$ property by extending the result of Rubio de Francia for the $\UMD$ property of Banach function spaces in Proposition \ref{prop:UMDopenRubio} to the $\UMD_{p_-,p_+}$ property of quasi-Banach function spaces.

\begin{theorem}\label{thm:UMDopen}
Let $0<p_-<p_+ \leq \infty$ and let $X$ be a quasi-Banach function space over a $\sigma$-finite measure space $(S,\mu)$ such that $X \in \UMD_{p_-,p_+}$. Then there exists an $\varepsilon>0$ such that such that $X \in \UMD_{p_-q_-,p_+/q_+}$ for all $0<q_-,q_+<1+\varepsilon$.
\end{theorem}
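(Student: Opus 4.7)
The plan is to combine Rubio de Francia's openness theorem for the UMD property of Banach function spaces (Proposition~\ref{prop:UMDopenRubio}) with the Lozanovskii duality identities for concavifications (Proposition~\ref{prop:pBFS}\ref{it:pBFSdualconcave}) and the stability of UMD under Calderón-Lozanovskii products (Proposition~\ref{prop:pBFS}\ref{it:pBFSUMD}).

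First I would reduce by Proposition~\ref{prop:UMDp+p-}\ref{it:UMDrescale} to the case $p_- = 1$, so that $X$ is a Banach function space with $V := (X^*)^{p_+'} \in \UMD$, and by Proposition~\ref{prop:UMDp+p-}\ref{it:UMDinterval} restrict attention to $q_-, q_+ \in [1, 1+\varepsilon)$ (the case $q_\pm \leq 1$ is immediate). Applying Proposition~\ref{prop:UMDopenRubio} to $V$ gives $V^q = (X^*)^{p_+' q} \in \UMD$ for $q \in (0, 1+\delta)$; specialising to $q = 1/p_+' \leq 1$ yields $X^* \in \UMD$, hence $X \in \UMD$ by the self-duality of UMD. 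A second application of Proposition~\ref{prop:UMDopenRubio}, now to $X$, yields $X^{q_-} \in \UMD$ for $q_-$ close to $1$, supplying the $q_-$-convexity of $X$ that the target $\UMD_{q_-, p_+/q_+}$ statement requires. Calderón-Lozanovskii interpolation (Proposition~\ref{prop:pBFS}\ref{it:pBFSUMD}) between the UMD endpoints $X^* = (X^*)^1$ and $V = (X^*)^{p_+'}$ then extends the UMD range to $(X^*)^\alpha \in \UMD$ for $\alpha$ in an open interval containing $[1, p_+']$; in particular the values $q_-\gamma$ with $\gamma := (p_+/(q_-q_+))'$ and $q_-, q_+$ near $1$ all lie in this interval.

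The crux of the proof is the Lozanovskii identity (Proposition~\ref{prop:pBFS}\ref{it:pBFSdualconcave} applied with $\theta = 1/q_-$ to $X = (X^{q_-})^{1/q_-}$), which for $q_- > 1$ reads
\begin{equation*}
  (X^*)^{q_-\gamma} \;=\; \tilde V \cdot L^{\gamma/(q_--1)}(S), \qquad \tilde V := \hab{(X^{q_-})^*}^\gamma.
\end{equation*}
The left-hand side is in $\UMD$ by the previous step. To deduce $\tilde V \in \UMD$ I would rewrite $(X^*)^{q_-\gamma}$ as a Calderón-Lozanovskii product $A^{1-\eta} \cdot B^\eta$ of two UMD factors, with $A$ a suitable concavification of $V$ and $B$ a Lebesgue space, choosing the splitting parameter $\eta \in (0,1)$ small enough that both endpoints fall inside the open UMD intervals produced above; identifying $\tilde V$ as the corresponding Calderón-Lozanovskii factor and invoking Proposition~\ref{prop:pBFS}\ref{it:pBFSUMD} then gives the claim. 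The main obstacle is precisely this extraction: it is not generally true that $A \cdot L^s \in \UMD$ implies $A \in \UMD$ (for instance $A = L^\infty(S)$ and $A \cdot L^s = L^s \in \UMD$), so the argument must exploit the particular Lozanovskii structure of $\tilde V$ as the $\gamma$-concavification of the dual of a concavification of $X$, together with the rich UMD scale $(X^*)^\alpha \in \UMD$ obtained above. The case $q_+ > 1$ is handled symmetrically, and the general parameters $0 < p_- < p_+ \leq \infty$ are recovered at the end by the rescaling afforded by Proposition~\ref{prop:UMDp+p-}\ref{it:UMDrescale}.
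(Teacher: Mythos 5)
The proposal correctly carries out the preliminary reductions (reduce to $p_-=1$, deduce $X\in\UMD$ and $X^{r_2}\in\UMD$ from Rubio de Francia's openness applied to $X$, and $(X^*)^{p_+'r_1}\in\UMD$ from Rubio de Francia's openness applied to $(X^*)^{p_+'}$). But then it gets stuck at exactly the step you flag as "the main obstacle", and that obstacle is not resolved. Interpolating between $(X^*)^1$ and $(X^*)^{p_+'r_1}$ only ever produces spaces $(X^*)^\alpha$, and when you try to extract the target $\hab{(X^{q_-})^*}^{\gamma}$ from the identity $(X^*)^{q_-\gamma}=\hab{(X^{q_-})^*}^{\gamma}\cdot L^{1/((q_--1)\gamma)}(S)$ (note: the Lebesgue exponent you wrote, $\gamma/(q_--1)$, is also incorrect), you are left with a genuine $L^s$-factor that cannot be stripped away — as you yourself observe. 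The proposal ends with the hand-wave that "the argument must exploit the particular Lozanovskii structure", without saying how, so the crux of the theorem is missing.

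The idea that closes the gap in the paper is to \emph{leave} the scale $(X^*)^\alpha$: instead of interpolating between two concavifications of $X^*$, one forms
\begin{equation*}
  Y:=\bracb{(X^r)^*,\,(X^*)^{p_+'r}}_{\theta},\qquad \theta=\frac{r'}{p_++r'},\qquad r=\min\cbrace{r_1,r_2,1+\tfrac1{p_+'}},
\end{equation*}
where $(X^r)^*$ is genuinely \emph{not} of the form $(X^*)^\alpha$. Expanding $Y$ by Proposition~\ref{prop:pBFS}\ref{it:pBFScomplex}, \ref{it:pBFSdualconcave} and using $X^*=\hab{(X^r)^*}^{1/r}\cdot L^{r'}(S)$, one obtains $Y=\hab{(X^r)^*}^{\alpha}\cdot L^{1/\beta}(S)$ with $\alpha=\frac{p_++p_+'r'}{p_++r'}$, $\beta=\frac{p_+'r}{p_++r'}$, and now the Lozanovskii identity can be applied once more \emph{in reverse}: $Y=\has{\hab{(X^{q_-})^*}}^{\alpha+\beta}$ with $q_-=\frac{r\alpha}{\alpha+\beta}>1$ and $\alpha+\beta=(p_+/q_-)'$. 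So the $L^s$-factor disappears precisely because the endpoint $(X^r)^*$ was chosen to be the dual of a concavification of $X$, not a concavification of $X^*$. This is the algebraic step your proposal does not see, and without it the argument does not go through.
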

\begin{proof}
By Proposition \ref{prop:UMDp+p-}\ref{it:UMDrescale} we may assume $p_-=1$ without loss of generality. Note that the case $p_+ = \infty$ was already included in Proposition \ref{prop:UMDopenRubio}, so we restrict our attention to $p_+< \infty$.

Applying Proposition \ref{prop:UMDopenRubio} to $(X^*)^{p'}$ yields an $r_1>1$ such that $(X^*)^{p_+'r_1} \in \UMD$. Furthermore since $p_+'>1$ we know that $X^* \in \UMD$ and thus also $X \in \UMD$. So by Proposition \ref{prop:UMDopenRubio} applied to $X$ there exists an $r_2>1$ such that $X^{r_2} \in \UMD$.  Define $r = \min\cbrace{r_1,r_2, 1+\frac{1}{p_+'}}$.

Let $\theta = \frac{r'}{p_++r'} \in (0,1)$ and define the complex interpolation space
\begin{equation*}
  Y := \brac*{(X^r)^*,(X^*)^{p_+'r}}_\theta.
\end{equation*}
Note that since  $(X^r)^*, (X^*)^{p_+'r} \in \UMD$, we know by Proposition \ref{prop:pBFS}\ref{it:pBFSUMD} that $Y \in \UMD$ as well. Moreover using Proposition \ref{prop:pBFS} we have
\begin{align*}
  Y &= \bigl((X^r)^*\bigr)^{1-\theta} \cdot \bigl((X^*)^{p_+'r}\bigr)^\theta\\
  &= \bigl((X^r)^*\bigr)^{1-\theta} \cdot \has{\bigl((X^r)^{1/r}\bigr)^*}^{p_+'r \theta}\\
  &= \hab{(X^r)^*}^{\frac{p_+}{p_++r'}} \cdot \has{\bigl((X^r)^*\bigr)^{1/r}\cdot L^{r'}(S)}^{\frac{p_+'rr'}{p_++r'}}\\
  &= \hab{(X^r)^*}^{\frac{p_++p_+'r'}{p_++r'}} \cdot L^{\frac{p_++r'}{p_+'r}}(S)
\end{align*}
Define
\[
  \alpha :=  \frac{p_++p_+'r'}{p_++r'},\qquad\beta := \frac{p_+'r}{p_++r'} <   p_+' (r-1)<1.
\]
Again by Proposition \ref{prop:pBFS} we have
\begin{align*}
  Y &= \bigl((X^r)^*\bigr)^\alpha \cdot L^{\frac{1}{\beta}}(S) =  \has{\bigl((X^r)^*\bigr)^{\frac{\alpha}{\alpha+\beta}}\cdot L^{\frac{\alpha+\beta}{\beta}}(S)}^{\alpha+\beta}  = \bigl((X^{\frac{r\alpha}{\alpha+\beta}})^*\bigr)^{\alpha+\beta}.
\end{align*}
Take $q_- = \frac{r\alpha}{\alpha+\beta}$.  Then we have
\begin{align}\label{eq:q-equality}
  q_- = \frac{r \alpha}{\alpha+\beta} = \frac{r(p_++p_+'r')}{p_++p_+'r'+p_+'r} = \frac{p_+r+p_+'r'+p_+'r}{p_++p_+'r'+p_+'r} >1.
\end{align}
Moreover
\begin{equation*}
  \alpha+\beta - \frac{r\alpha}{p_+} = \frac{p_++p_+'r'+p_+'r-r - (p_+'-1)(r+r')}{p_++r'} =1
\end{equation*}
and therefore
\begin{align*}
 (p_+/q_-)'&= \frac{\alpha+\beta}{\alpha+\beta - \frac{r\alpha}{p_+}} = \alpha+\beta.
\end{align*}
So $Y = \hab{(X^{q_-})^*}^{(p_+/q_-)'}$ and since $Y \in \UMD$, this implies that $X \in \UMD_{q_-,p_+}$. By applying Proposition \ref{prop:UMDopenRubio} once more, we can find a $q_+>1$ such that $X \in \UMD_{q_-,p_+/q_+}$. By Proposition \ref{prop:UMDp+p-}\ref{it:UMDinterval}, this completes the proof with $\varepsilon = \min\cbrace{q_--1,q_+-1}>0$.
\end{proof}

\section{Proof of the main result}\label{sec:main}
In this section we will prove our main result, Theorem \ref{thm:maincor}. The proof of Theorem \ref{thm:maincor} consists of following ingredients:
\begin{itemize}
  \item The extension of Rubio de Francia's result for the $\UMD$ property to the setting of the $\UMD_{p_-,p_+}$ property, proven in Theorem \ref{thm:UMDopen}.
  \item A vector-valued Rubio de Francia iteration algorithm, see Lemma \ref{lem:rubioalgoritme}.
  \item A result for the product of weighted Bochner spaces, proven below in Lemma \ref{lemma:pBFSbochner}.
\end{itemize}
We start with the Rubio de Francia iteration algorithm lemma. We remark that Rubio de Francia iteration algorithms also play a key role in scalar-valued extrapolation, see for example \cite{CMP11}. Recall that we write $\phi_{a,b,\cdots}$ for a non-decreasing function $[1,\infty)^{2} \to [1,\infty)$, depending on the parameters $a,b,\cdots$ and the dimension $d$.
\begin{lemma}\label{lem:rubioalgoritme}
Fix $1<r<r_+\leq \infty$ and let $Y$ be a Banach function space over a $\sigma$-finite measure space $(S,\mu)$ with $Y \in \UMD_{r_+',\infty}$. For all $w \in A_r \cap RH_{(r_+/r)'}$ and nonnegative $u \in L^{r'}(w;Y)$ there is a nonnegative $v \in L^{r'}(w;Y)$ such that $u \leq v$, $\nrm{v}_{L^{r'}(w;Y)} \leq 2\nrm{u}_{L^{r'}(w;Y)}$, and $v(\cdot,s)w \in A_1\cap\RH_{r_+'}$ with
\begin{equation*}
  \max\cbraceb{[v(\cdot,s)w]_{A_1},[v(\cdot,s)w]_{\RH_{r_+'}} } \leq \inc_{Y,r,r_+}\hab{[w]_{A_r}, [w]_{RH_{(r_+/r)'}}}
\end{equation*}
for $\mu$-a.e $s \in S$.
\end{lemma}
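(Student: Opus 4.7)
The plan is to run Rubio de Francia's iteration algorithm in a suitably rescaled weighted Bochner space. The key reformulation is the elementary quantitative equivalence
\[
u \in A_1 \cap \RH_{r_+'} \quad\iff\quad u^{r_+'} \in A_1,
\]
with $\max\cbraceb{[u]_{A_1},[u]_{\RH_{r_+'}}} \leq [u^{r_+'}]_{A_1}^{1/r_+'} \leq [u]_{A_1}[u]_{\RH_{r_+'}}$, proven by H\"older and Jensen. So it suffices to produce a nonnegative $v \in L^{r'}(w;Y)$ with $v \geq u$, $\nrm{v} \leq 2\nrm{u}$, and $(v(\cdot,s)w)^{r_+'} \in A_1$ with a uniform $A_1$-characteristic for $\mu$-a.e.\ $s \in S$. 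The virtue of this reformulation is that the pure $A_1$ condition is governed by the scalar maximal operator, whose vector-valued extension $\widetilde M$ is available on $Y^{r_+'}$.

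Concretely, I would substitute $U := (uw)^{r_+'}$, which gives a bijection between nonnegative $u \in L^{r'}(w;Y)$ and nonnegative $U \in L^{r'/r_+'}(w^{1-r'};Y^{r_+'})$ with $\nrm{U}^{1/r_+'} = \nrm{u}$. Two facts make the vector-valued weighted maximal bound applicable on the rescaled side: first, since $Y \in \UMD_{r_+',\infty}$, the concavification $Y^{r_+'}$ has UMD dual and is therefore itself UMD, by self-duality of the UMD property; second, by Lemma \ref{lem:reverseholder} the hypothesis $w \in A_r \cap \RH_{(r_+/r)'}$ is equivalent to $w^{1-r'} \in A_{r'/r_+'}$ (where $r'/r_+'>1$ since $r<r_+$), with the characteristic controlled quantitatively by $[w]_{A_r}$ and $[w]_{\RH_{(r_+/r)'}}$. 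Combining these with \eqref{eq:maximalbdd} shows that $\widetilde M$ is bounded on $L^{r'/r_+'}(w^{1-r'};Y^{r_+'})$ with operator norm at most $\inc_{Y,r,r_+}([w]_{A_r},[w]_{\RH_{(r_+/r)'}})$.

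I would then run the usual Rubio de Francia series
\[
V := \sum_{k=0}^\infty \frac{\widetilde M^k U}{(2\nrm{\widetilde M})^k},
\]
yielding $V \geq U$, $\nrm{V} \leq 2\nrm{U}$, and $\widetilde M V \leq 2\nrm{\widetilde M}\,V$ in $Y^{r_+'}$. Setting $v := V^{1/r_+'}/w$ pulls everything back: $v \geq u$ pointwise in $Y$, $\nrm{v}_{L^{r'}(w;Y)} \leq 2^{1/r_+'}\nrm{u}_{L^{r'}(w;Y)} \leq 2\nrm{u}_{L^{r'}(w;Y)}$, and $\widetilde M((vw)^{r_+'}) \leq 2\nrm{\widetilde M}\,(vw)^{r_+'}$ in $Y^{r_+'}$. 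Passing to the $L^0(S)$-representation of $Y^{r_+'}$ this becomes the fiberwise estimate
\[
M\bigl((v(\cdot,s)w)^{r_+'}\bigr)(x) \leq 2\nrm{\widetilde M}\,(v(\cdot,s)w)^{r_+'}(x)
\]
for Lebesgue-a.e.\ $x \in \R^d$ and $\mu$-a.e.\ $s \in S$, that is, $(v(\cdot,s)w)^{r_+'} \in A_1$ fiberwise. Through the equivalence from the first paragraph this translates to the claimed quantitative bounds on $[v(\cdot,s)w]_{A_1}$ and $[v(\cdot,s)w]_{\RH_{r_+'}}$.

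The main potential obstacle is the fiberwise identification $(\widetilde M g)(x,s) = M(g(\cdot,s))(x)$ for nonnegative $Y^{r_+'}$-valued $g$, which is what turns the lattice inequality into a pointwise $A_1$ statement on the fibers. This is handled in the standard way by reducing the supremum defining $\widetilde M$ to a countable family of cubes (e.g.\ dyadic cubes with rational shifts), on which the lattice and pointwise suprema coincide $\mu$-a.e., using $\sigma$-finiteness of $\mu$. Everything else is either a routine substitution or the standard Rubio de Francia iteration.
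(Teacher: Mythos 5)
Your proposal is correct and follows essentially the same route as the paper: the substitution $u_w = (uw)^{r_+'}$, the identification of the rescaled space $L^{r'/r_+'}(w^{1-r'};Y^{r_+'})$, the transfer $w \in A_r \cap \RH_{(r_+/r)'} \iff w^{1-r'} \in A_{r'/r_+'}$ via Lemma~\ref{lem:reverseholder}, the Rubio de Francia series, and the final pull-back are all exactly the paper's steps. Your write-up is in places slightly more explicit than the paper's (the self-duality argument showing $Y^{r_+'} \in \UMD$, the observation that the constant is really $2^{1/r_+'} \leq 2$, and the remark on the fiberwise identification of $\widetilde{M}$ via a countable family of cubes), but these are elaborations of points the paper treats as routine, not a different argument.
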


\begin{proof}
  Fix $w \in A_r \cap RH_{(r_+/r)'}$ and $u \in L^{r'}(w;Y)$. Define
 \begin{align*}
   u_w &:= (uw)^{r_+' } \qquad \text{and} \qquad
   X := L^{r'/r_+'}\hab{w^{1-r'};Y^{r_+'}}.
 \end{align*}
 Then $u_w \in X$. By Lemma \ref{lem:reverseholder}\ref{it:weightsequiv3} we know that for $p:=\ha*{{r_+/r}}'(r-1)+1$ we have $w^{1-r'} \in A_{p'}$ with
   \begin{equation*}
     [w^{1-r'}]_{A_{p'}}^{\frac{1}{p'}}=\bracb{w^{(r_+/r)'}}_{A_p}^{\frac{1}{p}} \leq \hab{[w]_{A_r}[w]_{\RH_{(r_+/r)'}}}^{(r_+/r)'}
   \end{equation*}
   So since
   \begin{align*}
   p' = 1+\frac{1}{\ha*{{r_+/r}}'(r-1)} =
   \frac{r'}{r_+'}
   \end{align*}
   we know that $\widetilde{M}$ is bounded on $X$ by \eqref{eq:maximalbdd} with
   \begin{equation}\label{eq:maximalbddweight}
     \bigl\|\widetilde{M}\bigr\|_{X \to X} \leq \inc_{Y,r,r_+}\bigl([w]_{A_r} ,[w]_{\RH_{(r_+/r)'}}\bigr).
   \end{equation}
   Define
   \begin{equation*}
     v := w^{-1}\cdot\has{\sum_{n=0}^\infty \frac{\widetilde{M}^nu_w}{\hab{2\bigl\|\widetilde{M}\bigr\|_{X \to X}}^n}}^{1/r_+'}
   \end{equation*}
   where $\widetilde{M}^n$ is given by $n$ iterations of $\widetilde{M}$. As $\widetilde{M}^n u_w$ is nonnegative we know that $u \leq v$. Furthermore $v \in L^{r^\prime}(w;Y)$ with
   \begin{align*}
     \nrm{v}_{L^{r'}(w;Y)} = \nrms{\sum_{n=0}^\infty \frac{\widetilde{M}^nu_w}{\hab{2\bigl\|\widetilde{M}\bigr\|_{X \to X}}^n}}_X^{1/r_+'} \leq 2\nrm{u_w}_X^{1/r_+'} = 2\nrm{u}_{L^{r'}(w;Y)}.
   \end{align*}
 Moreover, since
 \begin{equation*}
     \widetilde{M}\bigl((vw)^{r_+'}\bigr)(\cdot,s) \leq 2 \bigl\|\widetilde{M}\bigr\|_{X \to X} (v(\cdot,s)w)^{r_+'},
   \end{equation*}
 we know that $(v(\cdot,s)w)^{r_+'} \in A_1$ for $\mu$-a.e $s \in S$. Thus it follows from \eqref{eq:maximalbddweight} and Lemma \ref{lem:reverseholder} that $v(\cdot,s)w \in A_1\cap\RH_{r_+'}$  with
\begin{equation*}
 \max\cbraceb{[v(\cdot,s)w]_{A_1},[v(\cdot,s)w]_{\RH_{r_+'}} } \leq \inc_{Y,r,r_+}\bigl([w]_{A_r} ,[w]_{\RH_{(r_+/r)'}}\bigr)
\end{equation*}
for $\mu$-a.e $s \in S$
\end{proof}

Next we prove the  result for the product of weighted Bochner spaces, which follows from the properties of product Banach function spaces in Proposition \ref{prop:pBFS} and complex interpolation of weighted Bochner spaces.
\begin{lemma}\label{lemma:pBFSbochner}
   Fix $m\in \N$ and $r \in (1,\infty)$. Let $Y_1,\cdots,Y_m$ be reflexive Banach function spaces over a $\sigma$-finite measure space $(S,\mu)$, let $w_1,\cdots,w_m$ be weights and take $\theta_1,\cdots,\theta_m \in (0,1)$ such that $\sum_{j=1}^m \theta_j = 1$. Define $Y = Y_1^{\theta_1}\cdots Y_m^{\theta_m}$ and $w = \prod_{j=1}^m w_j^{\theta_j}$. Then we have
  \begin{equation*}
    L^r(w;Y)=L^{r}\hab{w_1;Y_1}^{\theta_1} \cdots L^{r}\hab{w_m;Y_m}^{\theta_m}
  \end{equation*}
\end{lemma}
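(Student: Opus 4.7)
The plan is to proceed by induction on $m$, using complex interpolation of weighted Bochner spaces as the crucial input together with the identification of Calder\'on--Lozanovskii products with complex interpolation spaces from Proposition \ref{prop:pBFS}\ref{it:pBFScomplex}. The case $m=1$ is trivial, so the heart of the argument is the case $m=2$. For the inductive step from $m-1$ to $m$, I would group the first $m-1$ factors: setting $\eta := 1-\theta_m$, $\tilde\theta_j := \theta_j/\eta$ for $j<m$, and $\tilde Y := Y_1^{\tilde\theta_1}\cdots Y_{m-1}^{\tilde\theta_{m-1}}$, $\tilde w := \prod_{j=1}^{m-1} w_j^{\tilde\theta_j}$, one has $Y=\tilde Y^{\eta}\cdot Y_m^{\theta_m}$ and $w=\tilde w^{\eta}\cdot w_m^{\theta_m}$. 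Proposition \ref{prop:pBFS}\ref{it:pBFSrefl} (applied iteratively) gives that $\tilde Y$ is reflexive, so the case $m=2$ applied to $(\tilde Y,Y_m)$ with parameters $(\eta,\theta_m)$, together with the inductive hypothesis applied to $\tilde Y$, yields the claim after using the elementary identities $(X_0\cdot X_1)^\alpha = X_0^\alpha\cdot X_1^\alpha$ and $(X^\alpha)^\beta=X^{\alpha\beta}$ (immediate from the definition of the concavification) and the relation $\eta\tilde\theta_j=\theta_j$.

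For the base case $m=2$, write $\theta = \theta_2 = 1-\theta_1$. Since $Y_1$ and $Y_2$ are reflexive Banach function spaces and $r\in(1,\infty)$, the Bochner spaces $L^r(w_j;Y_j)$ are reflexive Banach function spaces over $\R^d\times S$ (with the product measure $w_j\,dx\otimes d\mu$ transferred to the common reference measure $dx\otimes d\mu$ via the weight). Two applications of Proposition \ref{prop:pBFS}\ref{it:pBFScomplex} identify both sides of the desired identity with complex interpolation spaces:
\begin{equation*}
Y_1^{\theta_1}\cdot Y_2^{\theta_2} = [Y_1,Y_2]_\theta, \qquad L^r(w_1;Y_1)^{\theta_1}\cdot L^r(w_2;Y_2)^{\theta_2} = [L^r(w_1;Y_1),L^r(w_2;Y_2)]_\theta.
\end{equation*}
It therefore remains to verify the weighted vector-valued Stein--Weiss-type identity
\begin{equation*}
[L^r(w_1;Y_1),L^r(w_2;Y_2)]_\theta = L^r\hab{w_1^{1-\theta}w_2^\theta;[Y_1,Y_2]_\theta},
\end{equation*}
which is a standard consequence of Calder\'on's complex interpolation theorem for vector-valued $L^p$-spaces together with Stein--Weiss interpolation for weights; the weights can be absorbed by rewriting $L^r(w_j;Y_j)$ as $L^r(dx;Y_j)$ after multiplication by $w_j^{1/r}$, reducing to the unweighted Bochner interpolation identity with the same exponent $r$ on both ends.

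The main obstacle is precisely this weighted complex-interpolation identity: the underlying reference measure has to be handled carefully, since the two weights $w_1,w_2$ give genuinely different measures. Everything else is a formal manipulation with Calder\'on--Lozanovskii products and induction; reflexivity is used only to invoke Proposition \ref{prop:pBFS}\ref{it:pBFScomplex} and \ref{it:pBFSrefl}, and these apply uniformly at every step.
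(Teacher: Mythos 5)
Your proof is correct and takes essentially the same route as the paper: induction on $m$, reducing the step from $m-1$ to $m$ to a two-factor interpolation step by grouping the first $m-1$ factors, with the key inputs being Proposition~\ref{prop:pBFS}\ref{it:pBFScomplex} (to pass between Calder\'on--Lozanovskii products and complex interpolation spaces) and the Stein--Weiss-type complex interpolation identity for weighted Bochner spaces. The paper inlines the two-factor argument directly into the inductive step and cites Triebel for the weighted Bochner interpolation, but the ingredients and the role of reflexivity are identical to yours.
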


\begin{proof}
  We will prove the lemma by induction. For $m=1$ the result is trivial. Now assume that the statement holds for $m=k-1$ for some $k \in \N$. We will show the statement for $m=k$.

  Let $\tilde{\theta}_j = \frac{\theta_j}{1-\theta_k}$ for $j=1,\cdots,k-1$ and define
  \begin{equation*}
    X = Y_1^{\tilde{\theta}_1}\cdots Y_{k-1}^{\tilde{\theta}_{k-1}},\qquad v = \prod_{j=1}^m w_j^{\tilde{\theta}_j}.
  \end{equation*}
 Using Proposition \ref{prop:pBFS}\ref{it:pBFScomplex} twice and complex interpolation of weighted Bochner spaces (see
\cite[Theorem 1.18.5]{Tr78} and \cite{Bu87}) we get
  \begin{align*}
    L^r(w;Y) &= L^r\hab{w;[X,Y_k]_{\theta_k}} = \bracb{L^r(v;X),L^r(w_k;Y_k)}_{\theta_k} \\ &= L^r\hab{v;X}^{1-\theta_k} \cdot L^{r}\hab{w_k;Y_k}^{\theta_k} \\
    &= \has{L^{r}\hab{w_1;Y_1}^{\tilde{\theta}_1} \cdots L^{r}\hab{w_{k-1};Y_{k-1}}^{\tilde{\theta}_{k-1}}}^{1-\theta_k} \cdot \hab{L^{r}\hab{w_k;Y_k}}^{\theta_k} \\
    &= L^{r}\hab{w_1;Y_1}^{\theta_1} \cdots L^{r}\hab{w_k;Y_k}^{\theta_k},
  \end{align*}
  which proves the lemma.
\end{proof}

With these preparatory lemmata we are now ready to prove our main theorem. We first state and prove the result in terms of $(m+1)$-tuples of functions. Afterwards, we present the main result, Theorem \ref{thm:maincor}, as a corollary. We write $\inc^{j=1,\ldots,m}_{a_j,b_j,\cdots}$ for a non-decreasing function $[1,\infty)^{2m} \to [1,\infty)$ depending on the parameters $a_j,b_j,\cdots$ for $j=1,\cdots,m$ and the dimension $d$.
\begin{theorem}[Multilinear limited range  extrapolation for vector-valued functions]\label{thm:multi-limited-range-extrap}
Fix $m\in\N$, let $X_1,\ldots,X_m$ be quasi-Banach function spaces over a $\sigma$-finite measure space $(S,\mu)$ and define $X=X_1\cdots X_m$. Let
\[
\mc{F} \subseteq L^0_+(\R^d;X)\times L^0_+(\R^d;X_1)\times\cdots\times L^0_+(\R^d;X_m).
\]
For $j=1,\cdots,m$ fix $0<p^-_j<p^+_j\leq\infty$ and assume that $X_j\in\UMD_{p_j^-,p_j^+}$. Moreover assume that for all $p_j\in(p_j^-,p_j^+)$, weights $w_j^{p_j}\in A_{p_j/p_j^-}\cap\RH_{(p^+_j/p_j)'}$ and $(f,f_1,\ldots,f_m)\in\mc{F}$, we have
\begin{equation}\label{eqn:extrap-assn2}
\nrm{f(\cdot,s)}_{L^p(w^p)} \leq \inc^{j=1,\ldots,m}_{p_j,p_j^-,p_j^+} \bigl([w_j^{p_j}]_{A_{p_j/p_j^-}},[w_j^{p_j}]_{\RH_{(p^+_j/p_j)'}}\bigr) \prod_{j=1}^m\nrm{f_j(\cdot,s)}_{L^{p_j}(w_j^{p_j})}
\end{equation}
for $\mu$-a.e. $s \in S$, where $\frac{1}{p}=\sum_{j=1}^m\frac{1}{p_j}$ and $w=\prod_{j=1}^m w_j$.

Then for all $p_j\in(p_j^-,p_j^+)$, weights $w_j^{p_j}\in A_{p_j/p_j^-}\cap\RH_{(p^+_j/p_j)'}$, and $(f,f_1,\ldots,f_m)\in\mc{F}$, we have
\begin{equation}\label{eqn:extrap-goal2}
\nrm{f}_{L^p(w^p;X)} \leq \inc^{j=1,\ldots,m}_{X_j,p_j,p_j^-,p_j^+} \bigl([w_j^{p_j}]_{A_{p_j/p_j^-}},[w_j^{p_j}]_{\RH_{(p^+_j/p_j)'}}\bigr) \prod_{j=1}^m\nrm{f_j}_{L^{p_j}(w_j^{p_j};X_j)},
\end{equation}
with $w$ and $p$ as before.
\end{theorem}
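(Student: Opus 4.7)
The plan is to derive \eqref{eqn:extrap-goal2} from the pointwise scalar bound \eqref{eqn:extrap-assn2} by combining duality on the vector-valued side, the Rubio de Francia iteration (Lemma~\ref{lem:rubioalgoritme}) applied pointwise in~$s$, the openness of $\UMD_{p_-,p_+}$ (Theorem~\ref{thm:UMDopen}) together with the self-improvement of $A_p$ and $RH_s$ (Lemma~\ref{lem:muckenhoupt}\ref{it:mw3} and a Gehring-type inequality), and the product Bochner-space identity (Lemma~\ref{lemma:pBFSbochner}). As established in Section~\ref{sec:UMDp-p+}, the product $X=X_1\cdots X_m$ satisfies $X\in\UMD_{p_-,p_+}$ with $\tfrac{1}{p_\pm}=\sum_j\tfrac{1}{p_j^\pm}$, so $X^{p_-}$ is a Banach function space and $Y:=(X^{p_-})^*\in\UMD_{(p_+/p_-)',\infty}$ by Proposition~\ref{prop:UMDp+p-}\ref{it:UMDrescale} and~\ref{it:UMDdual}. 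Setting $F:=f^{p_-}$ one has $\|f\|_{L^p(w^p;X)}^{p_-}=\|F\|_{L^{p/p_-}(w^p;X^{p_-})}$, and I would dualise to test against a nonnegative unit-norm $h\in L^{(p/p_-)'}(w^p;Y)$. An initial application of Theorem~\ref{thm:UMDopen} and the self-improvement of $A_p\cap RH_s$ allows one to slightly tighten the exponents $p_j^\pm$ without losing any hypothesis, providing the margin needed later.

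The weight $w^p$ lies in $A_{p/p_-}\cap RH_{(p_+/p)'}$, which matches the input condition of Lemma~\ref{lem:rubioalgoritme} with $r=p/p_-$ and $r_+=p_+/p_-$. Applying the lemma to~$h$ yields a nonnegative majorant $v\ge h$ of comparable norm such that $v(\cdot,s)\,w^p\in A_1\cap RH_{(p_+/p_-)'}$ with characteristics uniform in~$s$. A reverse-H\"older self-improvement then upgrades this to $v(\cdot,s)\,w^p\in A_{p_0/p_-}\cap RH_{(p_+/p_0)'}$ for some $p_0$ slightly above~$p_-$, independently of~$s$. Writing $v(\cdot,s)\,w^p=\prod_{j=1}^m(v(\cdot,s)\,w^p)^{\gamma_j}$ for $\gamma_j\in(0,1)$ summing to~$1$ and interpreting the $j$-th factor as $w_{j,s}^{p_{j,0}}$ with $p_{j,0}>p_j^-$ and $\sum_j 1/p_{j,0}=1/p_0$, the openness from the previous step guarantees that each $w_{j,s}^{p_{j,0}}$ lies in $A_{p_{j,0}/p_j^-}\cap RH_{(p_j^+/p_{j,0})'}$ with $s$-uniform constants. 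The scalar hypothesis \eqref{eqn:extrap-assn2} then yields, for $\mu$-a.e.~$s$,
\begin{equation*}
  \|f(\cdot,s)\|_{L^{p_0}(v(\cdot,s)\,w^p)}\le C\prod_{j=1}^m\|f_j(\cdot,s)\|_{L^{p_{j,0}}(w_{j,s}^{p_{j,0}})}
\end{equation*}
with $C$ independent of~$s$.

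Controlling $\int F(x,s)\,v(x,s)\,w(x)^p\dd x$ by $\|f(\cdot,s)\|_{L^{p_0}(v(\cdot,s)\,w^p)}^{p_-}$ via H\"older/Jensen (using $p_0>p_-$), integrating over~$s$, inserting the scalar bound, and applying H\"older in~$S$ in combination with Lemma~\ref{lemma:pBFSbochner} (suitably rescaled to accommodate the different~$p_j$ via concavification) then identifies $L^p(w^p;X)$ with a Calder\'on--Lozanovskii product of the factors $L^{p_j}(w_j^{p_j};X_j)$ and recombines the product of scalar norms into $\prod_j\|f_j\|_{L^{p_j}(w_j^{p_j};X_j)}$, yielding \eqref{eqn:extrap-goal2}. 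The main obstacle I anticipate is the coordinated choice of~$p_0$, the auxiliary exponents~$p_{j,0}$, and the factorisation weights~$\gamma_j$ so that each $w_{j,s}^{p_{j,0}}$ lies in the precise $A\cap RH$ class required by the scalar hypothesis with constants uniform in~$s$, while simultaneously respecting $\sum 1/p_{j,0}=1/p_0$ and allowing the final H\"older step to reproduce exactly the right-hand side of \eqref{eqn:extrap-goal2}. This bookkeeping is where both the openness of $\UMD_{p_-,p_+}$ (to absorb the self-improvement losses at each factor) and the product Bochner-space identity (to undo the multilinear product on the right) are essential.
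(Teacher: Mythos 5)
Your outline has the right ingredients but the wrong architecture, and the argument as written cannot be repaired without reorganising it into the paper's form. The decisive gap is that you apply Lemma~\ref{lem:rubioalgoritme} \emph{once}, to a single dual functional $h$ for the product space, and then try to manufacture the $m$ individual weights for the scalar hypothesis by factoring $v(\cdot,s)w^p=\prod_{j}(v(\cdot,s)w^p)^{\gamma_j}$. This cannot work: since $w^p=(\prod_k w_k)^p$ entangles all the $w_k$, a power of $v(\cdot,s)w^p$ carries no usable relation to the target weight $w_j^{p_j}$. After you apply the scalar hypothesis, the right-hand side involves $\|f_j(\cdot,s)\|_{L^{p_{j,0}}(w_{j,s}^{p_{j,0}})}$ with $w_{j,s}^{p_{j,0}}$ a power of $v(\cdot,s)w^p$, and there is no H\"older step in $s$ that recovers $\|f_j\|_{L^{p_j}(w_j^{p_j};X_j)}$ from it. That step requires the weight inside the $j$-th scalar norm to be of the form $v_j(\cdot,s)\,w_j^{p_j}$ with $v_j$ of uniformly bounded norm in the dual of $L^{p_j/p_{j,0}}(w_j^{p_j};X_j^{p_{j,0}})$, which your single $v$ does not provide. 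Your exponent bookkeeping is also off: with $\sum_j\gamma_j=1$, $\gamma_j\in(0,1)$ and $\sum_j 1/p_{j,0}=1/p_0$, the scalar hypothesis applied to $w_{j,s}=(v w^p)^{\gamma_j/p_{j,0}}$ has composite weight $(\prod_j w_{j,s})^{p_0}=(vw^p)^{p_0\sum_j\gamma_j/p_{j,0}}$ on its left-hand side, and $p_0\sum_j\gamma_j/p_{j,0}<1$, so this is a strict fractional power of $vw^p$, not $vw^p$ as in your displayed inequality. Finally, dualising at exponent $p_-$ rather than at an intermediate exponent leaves a gap that your H\"older/Jensen step does not close: bounding $\int f^{p_-}v w^p\dd x$ by $\|f(\cdot,s)\|_{L^{p_0}(vw^p)}^{p_-}$ would need a uniform bound on $\int v(\cdot,s)w^p\dd x$, which is not available.

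The repair is to dualise and decompose \emph{before} iterating. Fix $p_j=\beta p_j^-$ and use Theorem~\ref{thm:UMDopen} with Lemma~\ref{lem:muckenhoupt}\ref{it:mw3} to find $1<\alpha<\beta$ with $X_j\in\UMD_{\alpha p_j^-,p_j^+}$ and $w_j^{p_j}\in A_{p_j/(\alpha p_j^-)}\cap\RH_{(p_j^+/p_j)'}$; set $q_j=\alpha p_j^-$ and $q=(\alpha/\beta)p$, so that $p_j/q_j=p/q$ for every $j$. Test $f^q\in L^{p/q}(w^p;X^q)$ against a unit-norm $u$ in the dual, and use Lemma~\ref{lemma:pBFSbochner} together with Proposition~\ref{prop:pBFS}\ref{it:pBFSdual} to decompose $u=\prod_j u_j^{p/p_j}$ with $u_j\in L^{(p_j/q_j)'}(w_j^{p_j};(X_j^{q_j})^*)$. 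Then apply Lemma~\ref{lem:rubioalgoritme} $m$ times, once to each $u_j$ with the corresponding weight $w_j^{p_j}$ and space $(X_j^{q_j})^*$. The resulting $v_j$ satisfy $v_j(\cdot,s)w_j^{p_j}\in A_{q_j/p_j^-}\cap\RH_{(p_j^+/q_j)'}$ with $s$-uniform constants \emph{and} have norm at most $2$ in exactly the dual of $L^{p_j/q_j}(w_j^{p_j};X_j^{q_j})$. The scalar hypothesis is applied at the exponents $q_j$ with weights $(v_j(\cdot,s)w_j^{p_j})^{1/q_j}$, and H\"older in $s$ (with $\sum_j q/q_j=1$) then peels off the factors $\|f_j\|_{L^{p_j}(w_j^{p_j};X_j)}$. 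General $p_j$ are recovered afterwards by a second, purely scalar-valued extrapolation from~\cite{CM17}.
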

\begin{proof}
We split the proof in two steps. In the first step we show that the conclusion of the theorem holds for specific choices of $p_j\in(p_j^-,p_j^+)$. In the second step we conclude that the result holds for all $p_j\in(p_j^-,p_j^+)$ through scalar-valued extrapolation.

\textbf{Step 1:} Let $1 < \beta < \min_j \limits \frac{p_j^+}{p_j^-}$. We will first prove the theorem for $p_j := \beta \cdot p_j^-$. Let $(f,f_1,\ldots,f_m) \in \mc{F}$ and take weights $w_j^{p_j}\in A_{p_j/p_j^-}\cap\RH_{(p^+_j/p_j)'}$ for $j=1,\cdots,m$. From Theorem \ref{thm:UMDopen} and Lemma \ref{lem:muckenhoupt}\ref{it:mw3} it follows that there exists an $1<\alpha < \beta$ such that
\begin{equation}\label{eqn:qprop}
  X_j\in \UMD_{\alpha p_j^-,p_j^+} \qquad \text{and} \qquad  w_j^{p_j}\in A_{p_j/(\alpha p_j^-)}\cap\RH_{\ha*{{p_j^+/p_j}}'}
\end{equation}
with $[w_j^{p_j}]_{A_{p_j/(\alpha p_j^-)}}\leq C_{p_j,p_j^-}\,[w_j^{p_j}]_{A_{p_j/p_j^-}}$ for all $j\in\{1,\ldots, m\}$. We define
\begin{align*}
  q_j:=\alpha p_j^- \qquad &\text{and} \qquad q:=\frac{\alpha}{\beta} p .
\intertext{Note that}
  \frac{p}{q} = \frac{\beta}{\alpha} = \frac{p_j}{q_j} \qquad &\text{and} \qquad \frac{1}{q} = \frac{\beta}{\alpha} \sum_{j=1}^m \frac{1}{p_j} = \sum_{j=1}^m \frac{1}{q_j}.
\end{align*}

Let $u_j\in L^{(p_j/q_j)'}(w_j^{p_j};(X_j^{q_j})^\ast)$. By Proposition \ref{prop:UMDp+p-} and \eqref{eqn:qprop} we may apply Lemma \ref{lem:rubioalgoritme} for $j=1,\cdots,m$ with
\begin{equation*}
  r=p_j/q_j, \qquad r_+=p_j^+/q_j, \qquad Y=(X_j^{q_j})^\ast,
\end{equation*}
and weight $w_j^{p_j}$ to find nonnegative $v_j \in L^{(p_j/q_j)'}(w_j^{p_j};(X^{q_j})^\ast)$ such that
 \begin{itemize}
   \item $u_j\leq v_j$.
   \item $\nrm{v_j}_{L^{(p_j/q_j)'}(w_j^{p_j};(X_j^{q_j})^\ast)} \leq 2$.
   \item $v_j(\cdot,s)w_j^{p_j} \in A_{q_j/p_j^-} \cap RH_{(p_j^+/q_j)'}$ with for $\mu$-a.e. $s \in S$
   \begin{equation*}
   \max\cbraceb{[v_j(\cdot,s)w_j^{p_j}]_{A_{q_j/p_j^-}},[v_j(\cdot,s)w_j^{p_j}]_{\RH_{(p_j^+/q_j)'}} } \leq \inc_{X_j,p_j,p_j^-,p_j^+}\hab{[w_j^{p_j}]_{A_{p_j/p_j^-}}, [w_j^{p_j}]_{RH_{(p_j^+/p_j)'}}}.
\end{equation*}
 \end{itemize}
We set $v=\prod_{j=1}^m v_j^{1/q_j}w_j^{p_j/q_j}$ so that
\begin{equation*}
  \has{\prod_{j=1}^m u_j^{p/p_j}}w^p\leq\prod_{j=1}^m v_j^{q/q_j}w_j^{qp_j/q_j}=v^q.
\end{equation*}

Let $(f,f_1,\ldots,f_m)\in\mc{F}$. By Fubini's theorem, H\"older's inequality, the assumption \eqref{eqn:extrap-assn2}, and the properties of the $v_j$ we have
\begin{equation}\label{eqn:dcompest}
\begin{split}
\int_{\R^d}&\int_S  f^q \prod_{j=1}^m u_j^{p/p_j}  \dd\mu \, w^p\dd x \leq\int_S  \int_{\R^d} f^q v^q  \dd x \, \dd \mu\\
&\leq \int_S \inc^{j=1,\ldots,m}_{q_j,p_j^-,p_j^+} \bigl([v_j(\cdot,s)w_j^{p_j}]_{A_{q_j/p_j^-}},[v_j(\cdot,s)w_j^{p_j}]_{\RH_{(p_j^+/q_j)'}}\bigr)\prod_{j=1}^m\|f_j(\cdot,s)\|^q_{L^{q_j}(v_j w_j^{p_j})} \dd \mu(s) \\
&\leq\inc^{j=1,\ldots,m}_{X_j,p_j,p_j^-,p_j^+}\hab{[w_j^{p_j}]_{A_{p_j/p_j^-}}, [w_j^{p_j}]_{RH_{(p_j^+/p_j)'}}}\prod_{j=1}^m\left(\int_S\int_{\R^d} f_j^{q_j}v_j w_j^{p_j}\,\dd x\,\dd\mu\right)^{q/q_j}\\
&\leq\inc^{j=1,\ldots,m}_{X_j,p_j,p_j^-,p_j^+}\hab{[w_j^{p_j}]_{A_{p_j/p_j^-}}, [w_j^{p_j}]_{RH_{(p_j^+/p_j)'}}}\prod_{j=1}^m\|f_j^{q_j}\|^{q/q_j}_{L^{p_j/q_j}(w_j^{p_j};X_j^{q_j})}\|v_j\|^{q/q_j}_{L^{(p_j/q_j)'}(w_j^{p_j};(X_j^{q_j})^\ast)}\\
&\leq\inc^{j=1,\ldots,m}_{X_j,p_j,p_j^-,p_j^+}\hab{[w_j^{p_j}]_{A_{p_j/p_j^-}}, [w_j^{p_j}]_{RH_{(p_j^+/p_j)'}}} \has{\prod_{j=1}^m\|f_j\|_{L^{p_j}(w_j^{p_j};X_j)}\|u_j\|^{1/q_j}_{L^{(p_j/q_j)'}(w_j^{p_j};(X_j^{q_j})^\ast)}}^q.
\end{split}
\end{equation}

Now by Lemma \ref{lemma:pBFSbochner} with
\begin{equation*}
 r= p/q, \qquad  Y_j = X_j^{q_j}, \qquad \theta_j = q / q_j
\end{equation*}
and weights $w_j^{p_j}$, Proposition \ref{prop:pBFS}\ref{it:pBFSdual} and the duality of Bochner spaces (see \cite[Corollary 1.3.22]{HNVW16}), we have
\begin{equation*}
  L^{p/q}(w^p;X^q)^* = L^{(p_1/q_1)'}\hab{w_1^{p_1};(X_1^{q_1})^*}^{q/q_1} \cdots L^{(p_m/q_m)'}\hab{w_m^{p_m};(X_m^{q_m})^*}^{q/q_m}.
\end{equation*}
Thus, picking $u\in L^{p/q}(w^p;X^q)^*$ of norm $1$, by taking an infimum over all decompositions $u=\prod_{j=1}^m u_j^{p/p_j}$ with $u_j\in L^{(p_j/q_j)'}\hab{w_j^{p_j};(X_j^{q_j})^\ast}$, we may conclude from \eqref{eqn:dcompest} that
\[
\int_{\R^d}\int_S  f^q u  \dd\mu \, w^p\dd x\leq\inc^{j=1,\ldots,m}_{X_j,p_j,p_j^-,p_j^+}\hab{[w_j^{p_j}]_{A_{p_j/p_j^-}}, [w_j^{p_j}]_{RH_{(p_j^+/p_j)'}}} \has{\prod_{j=1}^m\|f_j\|_{L^{p_j}(w_j^{p_j};X_j)}}^q.
\]
Thus, the result for these specific $p_j$'s follows from
\begin{align*}
  \|f\|^q_{L^p(w^p;X)} = \|f^q\|_{L^{p/q}(w^p;X^q)}= \sup_{ \|u\|_{L^{p/q}(w^p;X^q)^*}=1} \int_{\R^d}\int_S f^qu\dd\mu\,w^p\dd x.
\end{align*}
\bigskip

{\bf Step 2:} We may finish the proof for general $p_j$'s by appealing to the scalar-valued limited range multilinear extrapolation result by Cruz-Uribe and Martell \cite{CM17}. Indeed, we define a new family
\[
\widetilde{\mc{F}}:=\cbraceb{\hab{\|f\|_{X},\|f_1\|_{X_1},\ldots,\|f_m\|_{X_m}} : (f,f_1,\ldots,f_m) \in \mc{F}}.
\]
Then $\widetilde{\mc{F}} \subset L^0_+(\R^d)^{m+1}$ and by Step 1 we have
\[
\nrmb{\tilde{f}}_{L^p(w^p)}\leq\inc^{j=1,\ldots,m}_{X,p_j^-,p_j^+}\hab{[w_j^{p_j}]_{A_{p_j/p_j^-}}, [w_j^{p_j}]_{RH_{(p_j^+/p_j)'}}} \prod_{j=1}^m\nrmb{\tilde{f}_j}_{L^{p_j}(w_j^{p_j})}
\]
for certain $p_j\in (p_j^-,p_j^+)$, all $(\tilde{f},\tilde{f}_1,\ldots,\tilde{f}_m)\in\widetilde{\mc{F}}$, and all weights $w_j^{p_j}\in A_{p_j/p_j^-}\cap\RH_{(p_j^+/p_j)'}$. The result for general $p_j\in (p_j^-,p_j^+)$ then follows directly from \cite[Theorem 1.3 and Corollary 1.11]{CM17}, proving the assertion.
\end{proof}
Finally, we will prove the main result from the introduction, which is a direct corollary of Theorem \ref{thm:multi-limited-range-extrap}.
\begin{proof}[Proof of Theorem \ref{thm:maincor}]
We wish to apply Theorem \ref{thm:multi-limited-range-extrap} to the collection
\begin{equation*}
\mc{F} = \cbraceb{(\abs{\widetilde{T}(f_1,\ldots,f_m)},\abs{f_1},\ldots,\abs{f_m}): \,\,f_j\colon\R^d \to X_j \text{ simple}}.
\end{equation*}
Our assumption implies that there are $p_j\in(p_j^-,p_j^+)$ so that for all weights $w_j^{p_j}\in A_{p_j/p_j^-}\cap\RH_{(p_j^+/p_j)'}$ the a priori estimate \eqref{eqn:extrap-assn2} in Theorem \ref{thm:multi-limited-range-extrap} holds. By appealing to the scalar-valued limited range multilinear extrapolation result \cite{CM17} we may conclude that \eqref{eqn:extrap-assn2} in fact holds for all $p_j\in(p_j^-,p_j^+)$ and weights $w_j^{p_j}\in A_{p_j/p_j^-}\cap\RH_{(p_j^+/p_j)'}$. Thus, Theorem \ref{thm:multi-limited-range-extrap} implies that
\begin{equation}\label{eq:extend}
  \nrmb{\widetilde{T}(f_1,\ldots,f_m)}_{L^p(w^p;X)} \leq C\,\prod_{j=1}^m\|f_j\|_{L^{p_j}(w_j^{p_j};X_j)}
\end{equation}
for all simple functions $f_j\colon\R^d \to X_j$, where $C$ depends only on the $X_j$, $p_j$, and the characteristic constants of the weights. If $T$ is $m$-linear, then \eqref{eq:extend} extends directly to all $f_j\in L^{p_j}(w_j^{p_j};X_j)$ by density. If $T$ is $m$-sublinear and positive valued, then we fix simple functions $f_j\colon:\R^d \to X_j$ for $j\in\{2,\ldots,m\}$. For any pair of simple functions $f_1,g_1:\R^d\to X_1$ we have
\begin{equation*}
  \widetilde{T}(f_1,\ldots, f_m) = \widetilde{T}(f_1-g_1+g_1,f_2\ldots, f_m) \leq \widetilde{T}(f_1-g_1,f_2\ldots, f_m)+\widetilde{T}(g_1,f_2\ldots,f_m)
\end{equation*}
so that
\begin{align*}
  \nrmb{\widetilde{T}(f_1,\ldots, f_m)-\widetilde{T}(g_1,f_2\ldots,f_m)}_{L^p(w^p;X)} &\leq \nrmb{\widetilde{T}(f_1-g_1,f_2\ldots, f_m)}_{L^p(w^p;X)}\\
  &\leq C\, \|f_1-g_1\|_{L^{p_1}(w_1^{p_1};X_1)}\prod_{j=2}^m\|f_j\|_{L^{p_j}(w_j^{p_j};X_j)}.
\end{align*}
Thus, \eqref{eq:extend} extends to arbitrary $f_1 \in L^{p_1}(w_1^{p_1};X_1)$ by density. Iterating this argument for $j=2,\ldots m$ proves the result.
\end{proof}

\section{Applications}\label{sec:applications}
In this section we apply our main result to various operators, for which we obtain new vector-valued bounds.
\subsection{The bilinear Hilbert transform}
For $d=1$, The bilinear Hilbert transform $\BHT$ is defined by
\[
\BHT(f,g)(x)=\pv\int_{\R}\!f(x-t)g(x+t)\frac{\dd t}{t}.
\]
After its initial introduction by Calder\'on, it took thirty years until $L^p$ estimates were established by Lacey and Thiele \cite{LT99}. They showed that for $p_1,p_2\in (1,\infty]$ with $\frac{1}{p}=\frac{1}{p_1}+\frac{1}{p_2}<\frac{3}{2}$ one has
\begin{equation}\label{eqn:bihilisbounded}
\|\BHT(f,g)\|_{L^p}\leq C\|f\|_{L^{p_1}}\|g\|_{L^{p_2}}.
\end{equation}
As for weighted bounds, the first results were obtained by Culiuc, di Plinio, and Ou \cite{CDPO16}, and through the extrapolation result of Cruz-Uribe and Martell the range of exponents was increased \cite{CM17}, in particular recovering the full range of exponents for the unweighted result \eqref{eqn:bihilisbounded}. It was already shown in \cite{CM17} that this result implies corresponding vector-valued bounds for $\BHT$ for certain $\ell^s$-spaces. Moreover, vector-valued bounds for $\BHT$ have also been considered by Benea and Muscalu \cite{BM16}. In particular, they consider functions taking values in iterated $L^s$-spaces, see \cite[Theorem 8]{BM16} including the case $s=\infty$.

Through our main result we are able to obtain a new bounded vector-valued extension of the bilinear Hilbert transform. By combining the weighted estimates in \cite[Theorem 1.18]{CM17} with Theorem \ref{thm:maincor}, we get:
\begin{theorem}\label{thm:bilhilbo}
Let $q_1,q_2\in(1,\infty)$ so that $\frac{1}{q_1}+\frac{1}{q_2}<1$. For $j\in\{1,2\}$, define
\[
p_j^-:=\frac{2q_j}{1+q_j},\qquad p_j^+:=2q_j.
\]
Let $X=X_1\cdot X_2$, where $X_1$, $X_2$ are quasi-Banach function spaces over a $\sigma$-finite measure space $(S,\mu)$ satisfying $X_j\in\UMD_{p_j^-,p_j^+}$.
Then for all $p_1$, $p_2$ with $p_j\in(p_j^-,p_j^+)$ and all weights $w_1$, $w_2$ satisfying $w_j^{p_j}\in A_{p_j/p_j^-}\cap\RH_{(p_j^+/p_j)'}$ we have
\[
\big\|\widetilde{\BHT}(f,g)\big\|_{L^p(w^p;X)}\leq C'
\|f\|_{L^{p_1}(w_1^{p_1};X_1)}\|g\|_{L^{p_2}(w_2^{p_2};X_2)}
\]
for all $f\in L^{p_1}(w_1^{p_1};X_1)$, $g\in L^{p_2}(w_2^{p_2};X_2)$, where $\frac{1}{p}=\frac{1}{p_1}+\frac{1}{p_2}$, $w=w_1w_2$, and where $C'>0$ depends only on the $X_j$, $p_j$, $q_j$, and the characteristic constants of the weights.
\end{theorem}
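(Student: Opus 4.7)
The plan is to deduce Theorem \ref{thm:bilhilbo} as a direct application of the main vector-valued extrapolation result, Theorem \ref{thm:maincor}, to the operator $T=\BHT$ with $m=2$ and the prescribed exponents $p_j^-=2q_j/(1+q_j)$, $p_j^+=2q_j$. The whole task thus reduces to verifying the hypotheses of Theorem \ref{thm:maincor}; no new estimates for $\BHT$ itself need to be proved here.

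First I would invoke the scalar-valued weighted bound for $\BHT$. The input required by Theorem \ref{thm:maincor} is the existence of \emph{some} choice of exponents $p_j\in(p_j^-,p_j^+)$ for which one has
\[
\nrm{\BHT(f_1,f_2)}_{L^p(w^p)} \leq C\,\nrm{f_1}_{L^{p_1}(w_1^{p_1})}\nrm{f_2}_{L^{p_2}(w_2^{p_2})}
\]
whenever $w_j^{p_j}\in A_{p_j/p_j^-}\cap\RH_{(p_j^+/p_j)'}$. This is exactly what is provided by \cite[Theorem 1.18]{CM17}, which in turn is obtained by combining the sparse-form weighted estimates of Culiuc, di Plinio and Ou \cite{CDPO16} with the scalar-valued multilinear limited range extrapolation theorem. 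Note that the hypothesis $\frac{1}{q_1}+\frac{1}{q_2}<1$ ensures that the ``target'' exponent $p$ is strictly greater than $1$ and thus that the range of allowed $(p_1,p_2)$ is non-empty.

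Second I would verify the measurability hypothesis in Theorem \ref{thm:maincor}. Since $\BHT$ is bilinear, the first bullet of Remark \ref{rem:mainthm} applies: the induced operator $\widetilde{\BHT}$ is automatically well-defined and strongly measurable on simple $X_j$-valued functions, acting on elementary tensors $f_1\otimes \xi_1$, $f_2\otimes\xi_2$ by
\[
\widetilde{\BHT}(f_1\otimes\xi_1,f_2\otimes\xi_2) = \BHT(f_1,f_2)\otimes(\xi_1\cdot\xi_2) \in L^p(w^p)\otimes X,
\]
so condition \ref{main:1} of Theorem \ref{thm:maincor} is satisfied.

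Finally, since we have assumed $X_j\in \UMD_{p_j^-,p_j^+}$, Theorem \ref{thm:maincor} applies and yields the claimed bound for all $p_j\in(p_j^-,p_j^+)$ and all weights $w_j^{p_j}\in A_{p_j/p_j^-}\cap\RH_{(p_j^+/p_j)'}$, with constant depending only on the $X_j$, $p_j$, $q_j$, and the characteristic constants of the weights. I do not anticipate a genuine obstacle in this proof: the only thing that requires care is matching the exponents in \cite[Theorem 1.18]{CM17} to the parameters $p_j^-,p_j^+$ defined here and checking that the weight-class hypotheses line up verbatim with those of Theorem \ref{thm:maincor}; this is a routine bookkeeping exercise rather than a substantive difficulty.
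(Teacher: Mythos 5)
Your proposal is correct and matches the paper's proof exactly: the paper likewise deduces Theorem \ref{thm:bilhilbo} in one line by combining the scalar-valued weighted estimates for $\BHT$ from \cite[Theorem 1.18]{CM17} with Theorem \ref{thm:maincor} ($m=2$, the bilinearity of $\BHT$ handling the measurability hypothesis via Remark \ref{rem:mainthm}). One small inaccuracy in an aside: the hypothesis $\frac{1}{q_1}+\frac{1}{q_2}<1$ does not force $p>1$ (indeed $p$ may be less than $1$, and the intervals $(p_j^-,p_j^+)$ are nonempty for every $q_j>0$); that condition is simply the hypothesis inherited verbatim from \cite[Theorem 1.18]{CM17}, but this does not affect the validity of the argument.
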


By Example \ref{ex:exproducts} we have $\ell^s = \ell^{s_1} \cdot \ell^{s_2}$ for $s_1, s_2\in (0,\infty)$ and $\frac{1}{s}=\frac{1}{s_1}+\frac{1}{s_2}$. Thus, we recover \cite[Theorem 1.29]{CM17} by Example \ref{ex:concavification}. It is implicit from the arguments in \cite{CDPO16} that there are more general weighted estimates for $\BHT$ leading to a wider range of vector-valued extensions. For a technical discussion on this, we refer the reader to \cite[Section 5]{CM17}.

Furthermore by Proposition \ref{prop:UMDp+p-}\ref{it:UMDLp} we can also handle iterated $L^s$-spaces as considered by Benea and Muscalu \cite{BM16}, but our results do not overlap as we do not obtain bounds involving $L^\infty$-spaces. Such spaces might be in the scope of a generalized version of our main theorem using multilinear weight classes combined with a multilinear $\UMD$ condition, see also Remark \ref{rem:intro} and \cite{N18}.

Finally, we mention the vector-valued bounds obtained by Hyt\"onen, Lacey, and Parissis \cite{HLP13} for the related bilinear quartile operator (the Fourier-Walsh model of $\BHT$). They consider estimates involving triples of more general $\UMD$ Banach spaces with so called \emph{quartile type} $q$.  It is unknown whether these estimates hold for $\BHT$ itself. Note that a Banach function space $X \in \UMD_{p_-,p_+}$ has quartile type $\max\cbrace{p_-',p_+}$ by Proposition \ref{prop:UMDp+p-}\ref{it:UMDinterp} and \cite[Proposition 4.1]{HLP13}.
\subsection{Multilinear Calder\'on-Zygmund operators}
Let $T$ be an $m$-linear operator, initially defined for $m$-tuples $f_1,\ldots,f_m\in C_c^\infty(\R^d)$, that satisfies
\[
T(f_1,\ldots, f_m)(x)=\int_{(\R^d)^m}\!K(x,y_1,\ldots,y_m)\prod_{j=1}^m f_j(y_j)\dd y,
\]
whenever $x\notin\cap_{j=1}^m\supp f_j$, where $K$ is a kernel defined in $(\R^d)^{m+1}$ outside of the diagonal $y_0=y_1=\cdots=y_m$. If $K$ satisfies the estimate
\[
|\partial_{y_0}^{\alpha_0}\cdots\partial_{y_m}^{\alpha_m} K(y_0,\ldots,y_m)|\leq\frac{C}{\left(\sum_{j,k=0}^m|y_j-y_k|\right)^{md+|\alpha_1|+\cdots+|\alpha_m|}}
\]
for all multi-indices $\alpha_j$ so that $\sum_{j=1}^m|\alpha_j|\leq 1$ and if there exist $p_1,\ldots, p_m$ so that $T$ extends to a bounded operator $L^{p_1}\times\cdots L^{p_m}\to L^p$ with $\frac{1}{p}=\sum_{j=1}^m\frac{1}{p_j}$, then $T$ is called an $m$-linear Calder\'on-Zygmund operator.

Multilinear Calder\'on-Zygmund operators first appeared in the work \cite{CM75} by Coifman and Meyer. Weighted estimates for these operators have been considered for example by Grafakos and Torres in \cite{GT02} and subsequently by Grafakos and Martell in \cite{GM04}, where it was shown that for all $p_j\in(1,\infty)$, all weights $w_j^{p_j}\in A_{p_j}$, and all $f_j\in L^{p_j}(w_j^{p_j})$ we have
\begin{equation*}
  \nrmb{T(f_1,\ldots,f_m)}_{L^p(w^p)} \leq C\,\prod_{j=1}^m\|f_j\|_{L^{p_j}(w_j^{p_j})},
\end{equation*}
where $w=\prod_{j=1}^m w_j$ and $\frac{1}{p}=\sum_{j=1}^m\frac{1}{p_j}$, and where $C$ depends only on the characteristic constants of the weights. Thus, by Theorem \ref{thm:maincor} we obtain the following result:
\begin{theorem}
Let $T$ be an $m$-linear Calder\'on-Zygmund operator and suppose $X_1,\ldots,X_m\in\UMD$. Then for all $p_j\in(1,\infty)$, all weights $w_j^{p_j}\in A_{p_j}$, and all $f_j\in L^{p_j}(w_j^{p_j};X_j)$ we have
\begin{equation*}
  \nrmb{\widetilde{T}(f_1,\ldots,f_m)}_{L^p(w^p;X)} \leq C'\,\prod_{j=1}^m\|f_j\|_{L^{p_j}(w_j^{p_j};X_j)},
\end{equation*}
where $X=X_1\cdots X_m$, $w=\prod_{j=1}^m w_j$, $\frac{1}{p}=\sum_{j=1}^m\frac{1}{p_j}$, and where $C'$ depends only on the $X_j$, $p_j$, and the characteristic constants of the weights.
\end{theorem}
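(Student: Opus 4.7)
The proof plan is essentially a direct verification of the hypotheses of Theorem \ref{thm:maincor} in the full range case $p_j^- = 1$, $p_j^+ = \infty$, so there is little genuine obstacle; the work is bookkeeping.

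First I would invoke the Grafakos--Martell scalar-valued weighted estimate \cite{GM04}, which is quoted verbatim in the paragraph preceding the statement: for each choice of $p_j \in (1,\infty)$ with $\frac{1}{p} = \sum_j \frac{1}{p_j}$, and each collection of weights $w_j^{p_j} \in A_{p_j}$, one has
\begin{equation*}
  \nrmb{T(f_1,\ldots,f_m)}_{L^p(w^p)} \leq C \prod_{j=1}^m \|f_j\|_{L^{p_j}(w_j^{p_j})},
\end{equation*}
with $C$ depending only on the weight characteristics. Since $A_{p_j/1} \cap \RH_{(\infty/p_j)'} = A_{p_j} \cap \RH_1 = A_{p_j}$, this is precisely the scalar-valued hypothesis of Theorem \ref{thm:maincor} with the choices $p_j^- = 1$ and $p_j^+ = \infty$.

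Second I would check the vector-valued hypothesis. By Definition \ref{def:UMDp-p+} the assumption $X_j \in \UMD$ is the same as $X_j \in \UMD_{1,\infty}$, since in that case the required space $\bigl((X_j^{1})^*\bigr)^{(\infty/1)'} = X_j^*$ has the $\UMD$ property exactly when $X_j$ does. Well-definedness and strong measurability of $\widetilde{T}$ on simple $X_j$-valued inputs are automatic from $m$-linearity: as recorded in the first bullet of Remark \ref{rem:mainthm}, for elementary tensors
\begin{equation*}
  \widetilde{T}(f_1 \otimes \xi_1, \ldots, f_m \otimes \xi_m) = T(f_1,\ldots,f_m) \otimes (\xi_1 \cdots \xi_m),
\end{equation*}
which lives in $L^p(w^p) \otimes X$, and simple functions are finite sums of such tensors.

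With these hypotheses verified, Theorem \ref{thm:maincor}\ref{main:1} applies directly and yields, for all $p_j \in (1,\infty)$ and all $w_j^{p_j} \in A_{p_j}$, the bound
\begin{equation*}
  \nrmb{\widetilde{T}(f_1,\ldots,f_m)}_{L^p(w^p;X)} \leq C' \prod_{j=1}^m \|f_j\|_{L^{p_j}(w_j^{p_j};X_j)}
\end{equation*}
with $C'$ depending on the $X_j$, the $p_j$, and the weight characteristics, which is exactly the claim. The only possible snag is to confirm that the product space $X = X_1 \cdots X_m$ is a well-defined (quasi-)Banach function space in the relevant sense; this is part of the framework of Theorem \ref{thm:maincor} and follows from the $\UMD$ assumption on each $X_j$ together with the discussion of Calder\'on--Lozanovskii products in Section \ref{sec:preliminaries}. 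Thus no new analytic work beyond \cite{GM04} and Theorem \ref{thm:maincor} is needed.
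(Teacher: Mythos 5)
Your proposal is correct and matches the paper's own (very brief) argument exactly: both simply observe that the Grafakos--Martell scalar weighted bounds supply the hypothesis of Theorem~\ref{thm:maincor} with $p_j^-=1$, $p_j^+=\infty$, note that $A_{p_j}\cap\RH_1=A_{p_j}$ and $\UMD_{1,\infty}=\UMD$, and invoke the linearity remark for well-definedness of $\widetilde{T}$. There is no substantive difference in approach, only that you spell out the bookkeeping that the paper leaves implicit.
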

This result is new, as previously only $\ell^s$-valued extensions had been considered in \cite{GM04}.

We wish to point out that, using a more appropriate multilinear weight condition, more general weighted bounds for multilinear Calder\'on-Zygmund operators have been found in \cite{LOPTT09}.
\subsection{Limited range extrapolation}
The remaining examples are for the linear case $m=1$.
\begin{example}[Fourier multipliers I]
For $a<b$, $q\in[1,\infty)$, and a function $m:[a,b]\to\C$ we define the $q$-variation norm
\[
\nrm{m}_{V^q([a,b])}:=\|m\|_{L^\infty([a,b])}+\sup\left(\sum_{j=0}^{n-1}|m(t_{j+1})-m(t_j)|^q\right)^{\frac{1}{q}},
\]
where the supremum is taken over all partitions $a=t_0<\ldots<t_n=b$ of the interval $[a,b]$. Let $\ms{D}:=\big\{\pm(2^k,2^{k+1}]:k\in\Z\big\}$ be the dyadic decomposition of $\R$. Then we define a class of multipliers
\[
V^q(\ms{D}):=\big\{m:\R\to\C : \sup_{I\in\ms{D}}\nrm{m|_{I}}_{V^q(I)}<\infty\big\}.
\]

For $q>2$ and $p_+:=2\left(\frac{q}{2}\right)'$
it was shown by Kr\'{o}l \cite[Theorem A(ii)]{Kr14} that for all $p\in[2,p_+)$, $w\in A_{p/2}\cap\RH_{(p_+/p)'}$ and $m\in V^q(\ms{D})$ the Fourier multiplier $T_m$ defined by $\ms{F}(T_mf)=m\ms{F}f$ satisfies
\[
\|T_m\|_{L^p(w)\to L^p(w)}<\infty.
\]
Therefore one may readily apply Theorem \ref{thm:maincor} with $p_-=2$ to the linear operator $T_m$. So for any Banach function space $X$ such that $X\in\UMD_{2,p_+}$ we find for all $p\in (2,p_+)$, all $w\in A_{p/2}\cap\RH_{(p_+/p)'}$ and $m\in V^q(\ms{D})$ that
\[
\nrmb{\widetilde{T}_m}_{L^p(w;X)\to L^p(w;X)}<\infty.
\]
Note that \cite[Theorem A(i)]{Kr14} was already extrapolated to the vector-valued setting by Amenta, Veraar, and the first author in \cite{ALV17}, proving that for
$m\in V^q(\ms{D})$ with $q\in[1,2]$ the Fourier multiplier $T_m$ has a bounded vector-valued extension for Banach function spaces $X\in\UMD_{q,\infty}$. Furthermore extensions of \cite[Theorem A]{Kr14} for operator-valued Fourier multipliers have been obtained in \cite{ALV18}.
\end{example}

\begin{example}[Riesz transforms associated with elliptic operators]
Let $A\in L^\infty(\R^d;\C^{d\times d})$ satisfy an ellipticity condition $\re(A(x)\xi\cdot\overline{\xi})\geq\lambda|\xi|^2$ for a.e. $x\in\R^d$, and all $\xi\in\C^d$. Then we may consider a second order divergence form operator
\[
L:=-\divv(A\nabla f),
\]
defined on $L^2$, which due to the ellipticity condition on $A$ generates an analytic semigroup $(e^{-t L})_{t>0}$ in $L^2$. Let $1\leq p_-<p_+\leq \infty$. If both the semigroup and the gradient family $(\sqrt{t}\nabla e^{-t L})_{t>0}$ satisfy $L^{p_-}$--$L^{p_+}$ off-diagonal estimates, then the Riesz transform $R:=\nabla L^{-1/2}$ is a bounded operator in $L^p(w)$ for all $p\in(p_-,p_+)$ and all weights $w\in A_{p/p_-}\cap\RH_{(p_+/p)'}$, see \cite{AM07, BFP16}.  The values of $p_-$ and $p_+$ for which such off-diagonal estimates hold depend on the dimension $d$ and on the matrix-valued function $A$ and are studied in detail in \cite{Au07}. The result we obtain is that if a Banach function space $X$ satisfies $X\in\UMD_{p_-,p_+}$, then for all $p\in (p_-,p_+)$ and all weights $w\in A_{p/p_-}\cap\RH_{(p_+/p)'}$ we have
\[
\nrmb{\widetilde{R}}_{L^p(w;X)\to L^p(w;X)}<\infty.
\]
This result is new in the sense that previously such bounds were previously only known for $X=\ell^s$ through the limited range extrapolation result in \cite{AM07}.
\end{example}

Next, we consider a class of operators satisfying a certain sparse domination property. A collection $\mathcal{S}$ of cubes in $\R^d$ is called \textit{sparse} if there is a pairwise disjoint collection of sets $(E_Q)_{Q\in\mathcal{S}}$ so that for each $Q\in\mathcal{S}$ we have $E_Q\subseteq Q$ and $|Q|\leq 2|E_Q|$. We say that a (sub)linear operator $T$ satisfies the sparse domination property with parameters $1\leq p_-<p_+\leq\infty$ if there is a $C>0$ so that for all compactly supported smooth functions $f,g:\R^d\to\C$ we have
\begin{equation}\label{eq:sparsebounds}
|\langle Tf,g\rangle|\leq C\sup_{\mathcal{S}\text{ sparse}}\sum_{Q\in\mathcal{S}}\langle |f|^{p_-}\rangle_{Q}^{\frac{1}{p_-}}\langle |g|^{p_+}\rangle_{Q}^{\frac{1}{p_+}}|Q|,
\end{equation}
where the supremum runs over all sparse collections of cubes $\mc{S}$. For an operator $T$ we denote the optimal constant $C$ appearing in \eqref{eq:sparsebounds} by $\|T\|_{S(p_-,p_+)}$. Estimates in the form \eqref{eq:sparsebounds} were first considered in \cite{BFP16} where it was shown that
\[
\|T\|_{S(p_-,p_+)}<\infty\,\Rightarrow\,\|T\|_{L^p(w)\to L^p(w)}<\infty
\]
for $p \in (p_-,p_+)$ and $w\in A_{p/p_-}\cap\RH_{(p_+/p)'}$ by giving a quantitative estimate in terms of the characteristic constants of the weight. Thus, we may readily apply Theorem \ref{thm:maincor} with $m=1$ to any linear or positive-valued sublinear $T$ such that $\|T\|_{S(p_-,p_+)}<\infty$. This yields the following result:
\begin{theorem}\label{thm:sparsevecext}
Let $T$ be a linear or a positive-valued sublinear operator and let $X$ be a Banach function space over a $\sigma$-finite measure space $(S,\mu)$. Assume that for all simple functions $f:\R^d\to X$ the function $\widetilde{T}f(x,s):=T(f(\cdot,s))(x)$ is well-defined and strongly measurable.

If there are $1\leq p_-<p_+\leq\infty$ such that
\[
X\in\UMD_{p_-,p_+},\qquad \|T\|_{S(p_-,p_+)}<\infty,
\]
then for all $p\in(p_-,p_+)$, all weights $w\in A_{p/p_-}\cap\RH_{(p_+/p)'}$, and all $f\in L^p(w;X)$, we have
\begin{equation}\label{eqn:sparseconc}
\nrmb{\widetilde{T}f}_{L^p(w;X)}\leq C\|f\|_{L^p(w;X)},
\end{equation}
where $C$ depends only on $X$, $p$, $p_-$, $p_+$, and the characteristic constants of $w$.
\end{theorem}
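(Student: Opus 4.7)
The proof plan is to recognize Theorem \ref{thm:sparsevecext} as a direct specialization of the main extrapolation result, Theorem \ref{thm:maincor}, to the case $m=1$. The only ingredient that must be supplied separately is a scalar-valued weighted norm inequality for $T$, which the sparse domination hypothesis will provide.

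First, I would invoke the scalar-valued consequence of sparse domination established in \cite{BFP16}: the hypothesis $\|T\|_{S(p_-, p_+)}<\infty$ yields, for every $p_0\in(p_-,p_+)$ and every weight $v\in A_{p_0/p_-}\cap\RH_{(p_+/p_0)'}$, the bound
\[
\nrm{Tf}_{L^{p_0}(v)} \leq C(v)\, \nrm{f}_{L^{p_0}(v)},
\]
with $C(v)$ depending only on $\|T\|_{S(p_-,p_+)}$, $p_-$, $p_+$, $p_0$, and the characteristic constants of $v$. Rewriting $v=w_0^{p_0}$, this is precisely the scalar-valued input required by Theorem \ref{thm:maincor} with $m=1$, $p_1=p_0$, and $p_1^\pm=p_\pm$; any choice of $p_0\in (p_-,p_+)$ suffices to verify it.

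Second, since $X\in \UMD_{p_-,p_+}$ and $\widetilde{T}f(x,s)=T(f(\cdot,s))(x)$ is, by assumption, well-defined and strongly measurable for simple $X$-valued functions, all hypotheses of Theorem \ref{thm:maincor} are met (case (i) if $T$ is linear, case (ii) if $T$ is positive-valued sublinear). Applying that theorem immediately produces \eqref{eqn:sparseconc} for every $p\in(p_-,p_+)$ and every $w\in A_{p/p_-}\cap \RH_{(p_+/p)'}$, with constant depending only on $X$, $p$, $p_-$, $p_+$, and the characteristic constants of $w$. There is no real obstacle in the argument: the nontrivial work is absorbed into Theorem \ref{thm:maincor} and into \cite{BFP16}, and the present statement is simply the combination of these two inputs.
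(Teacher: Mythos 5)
Your proposal is correct and takes exactly the same route as the paper: the scalar-valued weighted bounds from \cite{BFP16} supply the hypothesis of Theorem \ref{thm:maincor} with $m=1$, and the theorem then gives the conclusion. The paper presents this identical two-step argument in the paragraph immediately preceding the theorem statement, so there is nothing to add.
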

We emphasize again that if $T$ is linear, then $\widetilde{T}f$ is automatically well-defined and strongly measurable for any simple function $f:\R^d\to X$, see also Remark \ref{rem:mainthm}. We conclude this section by giving several examples of operators satisfying sparse bounds.
\begin{example}[Fourier multipliers II]
For each $\delta\geq 0$, the Bochner-Riesz multiplier $B_\delta$ is defined as the Fourier multiplier $\ms{F}(B_\delta f)=(1-|\xi|^2)_+^\delta\ms{F}f$, where $t_+=\max(t,0)$. For $\delta\geq (d-1)/2$, $B_\delta$ satisfies weighted bounds $\|B_\delta\|_{L^p(w)\to L^p(w)}<\infty$ for any $p\in (1,\infty)$ and any $w\in A_p$, see \cite{Bu93, DR86, SS92}.

The situation is more complicated when $0<\delta<(d-1)/2$ and weighted bounds for such $\delta$ have, for example, been considered in \cite{CDL12, Ch85, DMOS08}. The idea to quantify weighted bounds for $B_\delta$ for $0<\delta<(d-1)/2$ through sparse domination was initiated by Benea, Bernicot, and Luque \cite{BBL17}. It was shown by Lacey, Mena, and Reguera that for this range of $\delta$ there are explicit subsets $R_{\delta,d}$ of the plane so that
\[
\\|B_\delta\|_{S(p_-,p_+)}<\infty
\]
for $(p_-,p_+)\in R_{\delta,d}$, see \cite{LMR17}. We also refer the reader to the recent work by Kesler and Lacey \cite{KL17} containing certain sparse endpoint bounds in dimension $d=2$.

As far as we know, the only vector-valued estimates that have been shown for $B_\delta$ have been for $X=\ell^s$, see \cite{BBL17}. For any $p_-$, $p_+$ and $\delta$ for which $\|B_\delta\|_{S(p_-,p_+)}<\infty$, we obtain by Theorem \ref{thm:sparsevecext} that inequality \eqref{eqn:sparseconc} with $\widetilde{T} = \widetilde{B}_\delta$ holds for any Banach function space $X$ satisfying $X\in\UMD_{p_-,p_+}$, yielding new vector-valued estimates.
\end{example}
\begin{example}[Spherical maximal operators]
Let $(S^{d-1},\sigma)$ denote the unit sphere in $\R^d$ equipped with its normalized Euclidean surface measure $\sigma$. For a smooth function $f$ on $\R^d$ we denote by $A_r f(x)$ the average of $f$ over the sphere centered at $x$ of radius $r>0$, i.e.,
\[
A_rf(x):=\int_{S^{d-1}}f(x-r\omega)\dd\sigma(\omega).
\]
We respectively define the lacunary spherical maximal operator and the full spherical maximal operator by
\[
M_{\lac}f:=\sup_{k\in\Z}|A_{2^k}f|,\quad M_{\full}f:=\sup_{r>0}|A_r f|,
\]
the latter having been introduced by Stein \cite{St76} and the former having been studied by Calder\'{o}n \cite{Ca79}. It was shown by Lacey \cite{La17} that for explicit subsets $L_d$, $F_d$ of the plane we have
\begin{align*}
\|M_{\lac}\|_{S(p_-,p_+)}<\infty & ,\quad\text{for $(p_-,p_+)\in L_d$,}\\
\|M_{\full}\|_{S(p_-,p_+)}<\infty & ,\quad\text{for $(p_-,p_+)\in F_d$.}
\end{align*}
These results recover the previous known $L^p$-bounds for these operators and yield weighted bounds.

To apply Theorem \ref{thm:sparsevecext} to $M_{\lac}$ and $M_{\full}$, one needs to check that these operators have well-defined and strongly measurable extensions to $X$-valued simple functions with $X$ a Banach function space over $(S,\mu)$. This can be checked as in \cite[Lemma 3.1]{HL17}. Therefore it follows from Theorem \ref{thm:sparsevecext} that if $(p_-,p_+)\in L_d$ or $(p_-,p_+)\in F_d$, then for any Banach function space $X\in\UMD_{p_-,p_+}$ we obtain the bound \eqref{eqn:sparseconc} for $\widetilde{T}=\widetilde{M}_{\lac}$ or $\widetilde{T}=\widetilde{M}_{\full}$ respectively. As far as we know, this is the first instance that vector-valued extensions have been considered for these operators.
\end{example}

\bibliographystyle{plain}

\end{document}